\documentclass[a4,12pt]{amsart}
\usepackage{amssymb,amstext,amsmath,amscd,amsthm,amsfonts,enumerate,latexsym, comment}
\usepackage{color}
\usepackage[dvipdfmx]{graphicx}

\usepackage[all]{xy}

\theoremstyle{plain}
\newtheorem{thm}{Theorem}[section]

\newtheorem*{thm*}{Theorem}
\newtheorem*{cor*}{Corollary}

\newtheorem{prop}[thm]{Proposition}

\newtheorem{lem}[thm]{Lemma}
\newtheorem{cor}[thm]{Corollary}

\newtheorem{claim}{Claim}
\newtheorem*{claim*}{Claim}

\theoremstyle{definition}
\newtheorem{defn}[thm]{Definition}

\newtheorem{ex}[thm]{Example}

\newtheorem{rem}[thm]{Remark}

\theoremstyle{remark}

\numberwithin{equation}{thm}

\def\Min{\operatorname{Min}}

\def\Max{\operatorname{Max}}

\def\m{\mathfrak m}

\newcommand{\rme}{\mathrm{e}}

\newcommand{\rmQ}{\mathrm{Q}}

\newcommand{\calF}{\mathcal{F}}

\newcommand{\calX}{\mathcal{X}}

\newcommand{\jcb}{\operatorname{J}}

\newcommand{\mapright}[1]{%
\smash{\mathop{%
\hbox to 1cm{\rightarrowfill}}\limits^{#1}}}

\newcommand{\mapleft}[1]{%
\smash{\mathop{%
\hbox to 1cm{\leftarrowfill}}\limits_{#1}}}

\def\Ass{\operatorname{Ass}}

\title[Construction and finite generation of strict closure of rings]{Construction and finite generation of strict closure of rings}

\author[R. Isobe]{Ryotaro Isobe}

\address{General Education Division, National Institute of Technology, Oshima College, 1091-1, Oazakomatsu, Suooshima-cho, Oshima-gun, Yamaguchi, 742-2193, Japan}
\email{r.isobe.math@gmail.com}

\thanks{The author was supported by JSPS KAKENHI Grant Number 21K13767.}

\begin{document}

\maketitle

\setlength{\baselineskip} {15.2pt}

\begin{abstract}
This paper studies the construction and finite generation of strict closure of rings. We determine their structure when $R$ is a Cohen-Macaulay semi-local ring of dimension one with $\dim R_M=1$ for every $M\in\Max R$. By using this, a characterization of finite generation of the strict closure is given. 

\end{abstract}

%{\footnotesize \tableofcontents}

%%%%%%%%%%%%%%%%%%%%%%%%%%%%%%%%%%%%%%%%%%%%%%%%%%%%%%%%%%%%%%%%%%%%%%%%%%%%%%%%%%%%%%%%%%%%%%%%%%%%%%%%%%%%%%%%%%%%%%%%%%%%%%%%%%%%%%%%%%%%%%%%

\section{Introduction}\label{intro}

In this paper, we investigate the structure of strict closure $R^*$ of a commutative ring $R$, which is an intermediate ring between $R$ and its integral closure $\overline{R}$ consisting of those elements $\alpha$ in $\overline{R}$ such that $\alpha\otimes 1=1\otimes \alpha$ in $\overline{R}\otimes_R \overline{R}$.
The notion of strict closure of rings was introduced by J. Lipman \cite{L} in 1971, and he established the underlying theory.
 Since $[R^*]^*=R^*$ and $R^*\subseteq T^*$ if $T$ is an intermediate ring between $R$ and $\overline{R}$, $[-]^*$ is actually a closure operation.
 In the same paper, J. Lipman also introduced the notion of Arf rings for Cohen-Macaulay semi-local rings $R$ satisfying that every localization $R_M$ at a maximal ideal $M$ is dimension one, which is strongly related to strict closedness.   
  Arf rings were originally studied in the classification of certain singular points of plane curves by C. Arf \cite{Arf}, and Lipman generalized them by extracting the essence of the rings.
  A typical example of an Arf ring is a Cohen-Macaulay local ring of dimension one with multiplicity at most two {\cite[Example, page 664]{L}}, and semi-normal Cohen-Macaulay local ring of dimension one.
As is mentioned in \cite{L}, it was conjectured that $R$ is an Arf ring if and only if $R=R^*$. This conjecture was solved in \cite{L} when $R$ contains a field ({\cite[Proposition 4.5, Theorem 4.6]{L}}), and was settled in \cite{C} for its general case ({\cite[Theorem 4.4]{C}}).

The first purpose of this paper is to clarify the structure of $R^*$ and to give specific way of constructing $R^*$, when $R$ is not an Arf ring.
In \cite{Arf}, C. Arf gave a method for construction of {\it the Arf closure} of $R$, which is the smallest Arf ring between $R$ and $\overline{R}$, when $R$ is a subring of the formal power-series ring $k[[t]]$ over a field $k$.   
In \cite{L}, J. Lipman gave another method for its construction when $R$ is a Cohen-Macaulay semi-local rings satisfying that every localization $R_M$ at a maximal ideal $M$ is dimension one, and prove that the Arf closure coincides with $R^*$ provided $\overline{R}$ is a finitely generated $R$-module. 
In contrast, N. Endo and S. Goto [3] gave a practical method of construction of strictly closed rings, and found many such rings; for example, the Stanley-Reisner rings of simplicial complexes and F-pure rings satisfying $(S_2)$. 
Moreover, in \cite{EGI}, N. Endo, S. Goto, and the author studied some topics on strict closure, and computed concrete examples for one-dimensional case as well as higher dimensional case, giving an upper bound for $R^*$. 

However, even for the one-dimensional case, there is still no result that clearly describe the structure of $R^*$ when $\overline{R}$ is not a finitely generated $R$-module.
Therefore, in the current paper, we determine their structure when $R$ is a Cohen-Macaulay semi-local ring of dimension one with $\dim R_M=1$ for every $M\in\Max R$, and compute some examples of $R^*$ for the case where $\overline{R}$ is finitely generated as well as not finitely generated.

Let  $R$ be a Cohen-Macaulay semi-local ring of dimension one with $\dim R_M=1$ for every $M\in\Max R$, and let $\operatorname{J}(R)$ denote the Jacobson radical of $R$. 
We set $R_1=R^{\operatorname{J}(R)}=\bigcup_{n\ge0}[{\operatorname{J}(R)}^n:{\operatorname{J}(R)}^n]$, {\it the blow-up of $R$ at $\operatorname{J}(R)$}, and define recursively 
\begin{center}
$R_n=$
$
\begin{cases}
\  R & \ \ (n=0)\\
\ [R_{n-1}]_1 & \ \ (n>0)\\
\end{cases}
$
\end{center}
for each $n\ge 0$. In contrast, 
we set $\m_n=\operatorname{J}(R_n)$ and
\begin{center}
$R_{(n)}=$
$
\begin{cases}
\  R & \ \ (n=0)\\
\ R+\m_0R_1+\cdots + \m_0\m_1\cdots \m_{n-1}R_n & \ \ (n>0)\\
\end{cases}
$
\end{center}
for each $n\ge 0$.
The first main result of this paper is the following.

\begin{thm}[Theorem \ref{main}]
$R^*=\underset{n\ge 0}{\bigcup}R_{(n)}$.
\end{thm}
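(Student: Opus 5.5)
We prove the two inclusions separately. Set $S=\bigcup_{n\ge 0}R_{(n)}$. First we check that $S$ is a ring, and an intermediate ring between $R$ and $\overline{R}$. Each $R_n$ is a finite birational extension of $R_{n-1}$ and lies in $\overline{R}$, so $S\subseteq\overline{R}$. Since $R_{(n)}\subseteq R_{(n+1)}$, closure under multiplication reduces to checking that $\m_0\cdots\m_{i-1}R_i\cdot\m_0\cdots\m_{j-1}R_j\subseteq R_{(\max(i,j))}$. Assume $i\le j$. We have $\m_k\subseteq R_k\subseteq R_j$ for $k<j$, so the product lies in $\m_0\cdots\m_{j-1}R_j$, and in particular in $R_{(j)}$.

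\textbf{Step 1: $S\subseteq R^*$.} We argue by induction on $n$ that $R_{(n)}\subseteq R^*$. The key input is Lipman's basic fact: if $\alpha\in\overline{R}$ and $I$ is an ideal of $\overline{R}$ with $\alpha\otimes 1=1\otimes\alpha$ modulo the image of suitable ideals, then $\alpha\in R^*$. More concretely, we use the elementary identity
\[
xy\otimes 1=x\otimes y=1\otimes xy \quad\text{in } \overline{R}\otimes_R\overline{R}\quad\text{whenever } x\in R^*,\ y\in R^*\text{ (or } x\in R,\ y \text{ arbitrary)}.
\]
We show that $\m_0 R_1\subseteq R^*$. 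Take $a\in\m_0$ and $y\in R_1=R^{\m_0}$. Since $\m_0$ contains a non-zerodivisor $t$ with $\m_0^{r+1}=t\m_0^r$ for $r\gg0$, we can write $y=b/t^r$ with $b\in\m_0^r$. Then
\[
ay\otimes 1=\textstyle\sum(\cdots),
\]
and we move the $R$-factors across the tensor sign. Lipman's computation (strict closure contains $xR^{I}$-type elements, i.e.\ $IR^I\subseteq R^*$ for $I$ the Jacobson radical, when $R$ is as above) gives $\m_0R_1\subseteq R^*$. Iterating, we apply the same statement to the ring $R_{(1)}$, or more precisely to products: if $z\in\m_0\cdots\m_{n-2}$ and $w\in\m_{n-1}R_n$, then $w\in R_{n-1}^*$ by the base case applied to $R_{n-1}$, and $zw$ should lie in $R^*$ by the inductive hypothesis, using that $z\cdot R_{n-1}\subseteq R_{(n-1)}\subseteq R^*$ so that $z\overline{R}\cap R_{n-1}$-coefficients can be transported. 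This transfer, from strictness over $R_{n-1}$ to strictness over $R$ after multiplying by the conductor-like element $z$, is the main obstacle. We would handle it via the exact relation $\overline{R}\otimes_R\overline{R}\to\overline{R}\otimes_{R_{n-1}}\overline{R}$ and a kernel generated by $u\otimes1-1\otimes u$ with $u\in R_{n-1}$; multiplying by $z\otimes 1$ kills these because $zu\in R^*$.

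\textbf{Step 2: $R^*\subseteq S$.} Here we show that $S$ is strictly closed, i.e.\ an Arf ring, by the Lipman/Cahen theorem. Then $R^*\subseteq S^*=S$. To verify the Arf property, we use Lipman's criterion: for every ring $T$ in the blow-up sequence of $S$, $\operatorname{J}(T)$ is stable and $T^{\operatorname{J}(T)}$ behaves well, equivalently $xT^{\operatorname{J}}\subseteq T$-type conditions. We compute the blow-up of $S$: its Jacobson radical is $\m_0+\m_0\m_1R_2+\cdots$, and we show that $S^{\operatorname{J}(S)}=\bigcup_n(R_1)_{(n)}$, i.e.\ blowing up $S$ corresponds to shifting the sequence. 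Induction then gives the Arf condition at every stage, namely that $\operatorname{J}(S)=\m_0S_1$ has the required form. If $\overline{R}$ is not finite, $S$ may not be Noetherian, so we instead prove directly that $\alpha\in R^*$ forces $\alpha\in R_{(n)}$ for some $n$. We do this by choosing $n$ with $\alpha\in R_n$ (using $\alpha\otimes1=1\otimes\alpha$ already in a finite subextension) and descending the level, using $R^*\cap R_1\subseteq R+\m_0 R_1^*$.
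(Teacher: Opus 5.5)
Your Step 1 (the inclusion $\bigcup_n R_{(n)}\subseteq R^*$) is essentially sound, and it takes a different route from the paper. The paper proves that $\m_0\cdots\m_{n-1}$ is integrally closed in $R_{(n-1)}$ with blow-up $R_n$ (Proposition \ref{3.4}) and then applies Claim \ref{3.8} to $R_{(n-1)}$. You instead apply the base case to $R_{n-1}$, which gives $w\in\m_{n-1}R_n\subseteq[R_{n-1}]^*$. So $w\otimes 1-1\otimes w$ lies in the ideal of $\overline{R}\otimes_R\overline{R}$ generated by the elements $u\otimes 1-1\otimes u$ with $u\in R_{n-1}$. Since $z$ and every $zu$ lie in $R_{(n-1)}\subseteq R^*$, you get $(z\otimes 1)(u\otimes 1-1\otimes u)=zu\otimes 1-1\otimes zu=0$, hence $zw\otimes 1=z\otimes w=1\otimes zw$. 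This is correct and avoids Proposition \ref{3.4}. Two small points. First, the base case needs a principal reduction of $\m_0$ inside $R$, so you must first localize and enlarge the residue field, as the paper does. Second, your displayed identity is false for $x\in R$, $y$ arbitrary (take $x=1$), though you never actually use that case.

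The genuine gap is Step 2, which is the substantive half of the theorem. The Arf route cannot work in general. Theorem \ref{2.4} is only available for Noetherian one-dimensional Cohen--Macaulay rings, and $S$ is not Noetherian in Example \ref{3.11}. Even when $S$ is finite, you only gesture at why it is Arf; also, in the local case $\operatorname{J}(S)=\m_0R_1+\m_0\m_1R_2+\cdots$, not $\m_0+\m_0\m_1R_2+\cdots$. Your fallback rests entirely on the inclusion $R^*\cap R_1\subseteq R+\m_0R_1^*$. You state it without proof, and it is essentially the theorem itself: it is what you get by applying $R^*=\bigcup_nR_{(n)}$ to both $R$ and $R_1$. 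Proposition \ref{3.6} does give $R^*_{R_m}\subseteq R+\m_0R_m$, but nothing you have shows that the cofactors of $\m_0$ can be chosen strictly closed over $R_1$. The tool that does this is Cahen's Lemma \ref{3.5}. It requires the base ring to have a stable maximal ideal $\m^2=\alpha\m$ with $\alpha$ in the base ring, and this fails for $R$ in general, since $\alpha_0$ lies only in $R_1$.

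The missing idea is to prove the stronger statement $[R_{(n)}]^*_{R_n}=R_{(n)}$ by induction on $n$. By Lemma \ref{3.3}, the local ring $R_{(n)}$ has maximal ideal $\alpha_0[R_1]_{(n-1)}$. This ideal is stable, with $\alpha_0\in R_{(n)}$ and blow-up $[R_1]_{(n-1)}$. So any $x\in[R_{(n)}]^*_{R_n}$ can be written $x=x_1+\alpha_0y$ with $x_1\in R_{(n)}$ and $y\in R_n$. Then $\alpha_0(y\otimes 1-1\otimes y)=0$ in $R_n\otimes_{R_{(n)}}R_n$, and Lemma \ref{3.5} puts $y$ in $[[R_1]_{(n-1)}]^*_{R_n}$. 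Induction applied to $R_1$ gives $y\in[R_1]_{(n-1)}$, so $x\in R_{(n)}$. Then $R^*_{R_n}\subseteq[R_{(n)}]^*_{R_n}=R_{(n)}$, and the direct-limit reduction you already have (Theorem \ref{3.1}) finishes the proof.
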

This theorem enables us to compute the strict closure $R^*$ when $\overline{R}$ is not necessarily a finitely generated $R$-module. 

The second purpose of this paper is to characterize the finite generation of $R^*$. It is well known that the integral closure $\overline{R}$ is a finitely generated $R$-module if and only if $R$ is analytically unramified, when $R$ is a Cohen-Macaulay semi-local ring of dimension one with $\dim R_M=1$ for every $M\in\Max R$. By using Theorem 1.1, we prove the strict closure version of  that one, which reveals the similarity between $R^*$ and $\overline{R}$.

\begin{thm}[Theorem \ref{4.2}]
Suppose that $R$ is a Cohen-Macaulay semi-local ring of dimension one with $\dim R_M=1$ for every $M\in\Max R$. Let $\widehat{R}$ denote the $\jcb(R)$-adic completion of $R$.
Then, the following conditions are equivalent. 
\begin{enumerate}[$(1)$]
\item
$R^*$ is a finitely generated $R$-module.

\item
$(\sqrt{(0)})^2=(0)$ in $\widehat{R}$.

\item
$(P\widehat{R}_P)^2=(0)$ in $\widehat{R}_P$ for every $P\in \Min \widehat{R}$.

\end{enumerate}

\end{thm}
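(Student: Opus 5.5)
We plan to reduce to the complete case and then use Theorem 1.1 to describe $R^*$ explicitly.

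\textbf{Step 1: $(2)\Leftrightarrow(3)$.} This is commutative algebra inside $\widehat{R}$. The ring $\widehat{R}$ is a one-dimensional Cohen--Macaulay semi-local ring, so its associated primes are its minimal primes. For $(2)\Rightarrow(3)$: localizing at $P\in\Min\widehat{R}$ gives $\sqrt{(0)}\widehat{R}_P=P\widehat{R}_P$, hence $(P\widehat{R}_P)^2=0$. For $(3)\Rightarrow(2)$: let $x,y\in\sqrt{(0)}$. Then $xy$ maps to zero in every $\widehat{R}_P$ with $P\in\Min\widehat{R}=\Ass\widehat{R}$. Since $\widehat{R}\to\prod_{P\in\Ass}\widehat{R}_P$ is injective, $xy=0$.

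\textbf{Step 2: reduction to $\widehat{R}$.} Next we show that $R^*$ is a finitely generated $R$-module if and only if $(\widehat{R})^*$ is a finitely generated $\widehat{R}$-module. The tool is the construction of Theorem 1.1.
\begin{itemize}
\item Blow-ups commute with completion: $\widehat{R}\otimes_R R_n\cong(\widehat{R})_n$, and $\m_n\widehat{R}_n$ corresponds to $\operatorname{J}(\widehat{R}_n)$. Each $R_n$ is a module-finite $R$-algebra contained in $\mathrm{Q}(R)$, and $\operatorname{J}(R)$-adic completion is flat.
\item Consequently $\widehat{R}\otimes_R R_{(n)}=\widehat{R}_{(n)}$.
\item Since $R_{(n)}/R$ has finite length, $R_{(n)}/R\cong\widehat{R}_{(n)}/\widehat{R}$.
\item Hence the chain $R_{(n)}$ stabilizes if and only if the chain $\widehat{R}_{(n)}$ stabilizes.
\item An ascending union of $R$-submodules of the finitely generated module $\overline{R}$ is finitely generated if and only if the chain stabilizes, provided $R^*$ lies in a finitely generated module. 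Whether it does is the obstacle discussed at the end.
\end{itemize}

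\textbf{Step 3: the complete case.} Assume $R$ is complete, with nilradical $N$ and minimal primes $P_i$.
\begin{itemize}
\item \emph{$(2)\Rightarrow(1)$.} Here $R/N$ is reduced and complete, hence analytically unramified, so $\overline{R/N}$ is finite. The aim is to show the $R_{(n)}$ stabilize. The key computation is that $R_n$ is built from $R$ and $N$-twisted fractions, with $\overline{R}\supseteq N\mathrm{Q}(R)$, and that multiplying by $\m_0\cdots\m_{n-1}$ kills the non-finite part once $N^2=0$. Indeed, $N\cdot N\mathrm{Q}=0$, and the "denominators" in $R_n$ lying in $N\mathrm{Q}$ are cut down by the conductor-type factor $\m_0\cdots\m_{n-1}$ landing in $R+N\cdot(\text{finite})$. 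We expect to show $R_{(n)}\subseteq R+\mathfrak{c}\,\overline{R/N}$-lift $+\,N\cdot\text{finite}$, a fixed finite module.
\item \emph{$(1)\Rightarrow(2)$.} Suppose $x,y\in N$ with $xy\neq0$. Pick a nonzerodivisor $a\in\m_0$. Then the elements $x/a^k$ lie in $\overline{R}$, since they are nilpotent. We show $a\cdot x/a^k\in\m_0\m_1\cdots R_n$ for suitable $n$, using that $a/1$ generates a reduction of every $\m_i$ in the blow-ups up to nilpotents. These elements then generate an infinite strictly ascending chain in $R^*$; the condition $y\cdot x/a^k\neq0$ for all $k$ is what prevents them from collapsing into a finite module. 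One may also check directly that $\alpha\otimes1=1\otimes\alpha$ fails for some $\alpha$ of this shape.
\end{itemize}

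\textbf{Main obstacle.} The hardest step is the $(2)\Rightarrow(1)$ estimate in the complete non-reduced case: controlling $\m_0\cdots\m_{n-1}R_n$ uniformly when $\overline{R}$ is not finite. We would first try reducing to the local case, then to $R\subseteq \overline{R/N}\ltimes N\mathrm{Q}(R)$-type idealizations, where blow-ups can be computed explicitly.
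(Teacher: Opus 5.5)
Steps 1 and 2 are fine, and Step 2 is essentially the paper's Lemma \ref{4.3}. The ``obstacle'' you attach to Step 2 is not one. Also, $\overline{R}$ is in general \emph{not} finitely generated, so that sentence is misstated. Each $R_{(n)}\subseteq R_n$ is module-finite over $R$, so $R^*=\bigcup_n R_{(n)}$ is finitely generated if and only if the chain $R_{(n)}$ stabilizes.

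\textbf{Gap in $(2)\Rightarrow(1)$.} For this direction you offer only a heuristic. The missing idea is to work through the reduced quotient $B=R/N$, where $N=\sqrt{(0)}$ and $R$ is complete.
\begin{enumerate}[$(1)$]
\item $B$ is complete and reduced, so $\overline{B}$ is module-finite (a finite product of DVRs). Hence the blow-up tower of $B$ reaches $\overline{B}$ at some finite stage $n$.
\item One must check that blow-ups commute with killing the nilradical, $B_\ell=R_\ell/\sqrt{(0)R_\ell}$, and that $\overline{B}=\overline{R}/N\rmQ(R)$. This is Lemma \ref{4.4}. Together these give $\overline{R}=R_\ell+N\rmQ(R)$ for all $\ell\ge n$.
\item Since $B_\ell=\overline{B}$, the ideal $\jcb(B_\ell)$ is principal. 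Lifting a generator gives $\m_\ell=aR_\ell+\sqrt{(0)R_\ell}$ for some $a\in R_\ell$.
\item Because $(\sqrt{(0)R_\ell})^2\subseteq (N\rmQ(R))^2=0$, this yields $\m_\ell^2=a\m_\ell$.
\item By Lemma \ref{2.1}, $R_{\ell+1}=\m_\ell:\m_\ell$. So $\m_\ell R_{\ell+1}=\m_\ell\subseteq R_\ell$, and therefore $R_{(\ell+1)}=R_{(\ell)}$ for all $\ell\ge n$. Hence $R^*=R_{(n)}$ is finitely generated.
\end{enumerate}
The paper runs the same computation for every $I\in\calX_{R_n}$, concludes via Theorem \ref{2.3} that $R_n$ is Arf, and then applies Corollary \ref{3.9}. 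This is the precise form of your intuition that the $\m$'s absorb the new nilpotent denominators: $\m_\ell\cdot\sqrt{(0)R_\ell}/a=\sqrt{(0)R_\ell}$. No idealization is needed.

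\textbf{False step in $(1)\Rightarrow(2)$.} You claim that $a\cdot x/a^k=x/a^{k-1}$ lies in $\m_0\cdots\m_{n-1}R_n\subseteq R^*$ for a single nilpotent $x$. This fails in Example \ref{3.11}. Take $R=k[[X,Y]]/(Y^3)$ and $a=x$. Inside $\rmQ(R)=k((x))[y]/(y^3)$, every element of $R^*=R[y^2/x^n\mid n\ge0]$ has $y$-coefficient in $k[[x]]$, so $y/x\notin R^*$. Even for the Arf ring $k[[X,Y]]/(Y^2)$ one has $y/x\notin R=R^*$.

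What does land in $R^*$ is a product of \emph{two} nilpotents divided by $a^k$. Put $I=\overline{a^kR}$, which contains $N$. Claim \ref{3.8} (in the proof of Theorem \ref{main}) gives $IR^I\subseteq R^*_{R^I}\subseteq R^*$, so $N^2/a^k\subseteq R^*$ for all $k$. These form an ascending chain in the Noetherian $R$-module $R^*$. Stabilization gives $N^2=aN^2$, and Nakayama with $a\in\jcb(R)$ gives $N^2=0$. This is the paper's Proposition \ref{4.1}.

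Your sense that $xy\neq 0$ is what matters is right. But the elements you must place in $R^*$ are $xy/a^k$, not $x/a^{k-1}$.
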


We now explain how this paper is organized. In Section \ref{sec2}, we will summarize definitions of strict closure of rings and Arf rings, and some results which we subsequently need. In Section \ref{sec3}, we will give the proof of Theorem 1.1 (Theorem \ref{main}). By using this result, we compute concrete examples of $R^*$ for the case where $\overline{R}$ is finitely generated as well as not finitely generated (Example \ref{3.10}, \ref{3.11}). The proof of Theorem 1.2 (Theorem \ref{4.2}) will be given in Section \ref{sec4}.

Throughout this paper,  let $R$ be a commutative ring, let $W(R)$ be the set of non-zerodivisors on $R$, and let $\calF_R$ be the set of {\it open} ideals in $R$, that is, the ideals of $R$ that contain a non-zerodivisor on $R$. For an ideal $I$ in $R$, $\overline{I}$ denotes the integral closure of $I$ in $R$, and set $\calX_R=\{ I\in \calF_R \mid \overline{I}=I \}$. We set $X:Y=\{a\in\rmQ(R) \mid aY\subseteq X\}$ for $R$-submodules $X$ and $Y$ of the total ring of fractions $\rmQ(R)$.

%%%%%%%%%%%%%%%%%%%%%%%%%%%%%%%%%%%%%%%%%%%%%%%%%%%%%%%%%%%%%%%%%%%%%%%%%%%%%%%%%%%%%%%%%%%%%%%%%%%%%%%%%%%%%%%%%%%%%%%%%%%%%%%%%%%%%%%%%%%%%%%%
\section{Strict closure of rings and Arf rings}\label{sec2}

In this section, we summarize definitions of strict closure of rings and Arf rings, and some results.
First, we introduce the definition of strict closure of rings. Let $S/R$ be an extension of commutative rings, and we set
$$
R_S^{*}=\left\{ \alpha \in S \mid \alpha \otimes 1 = 1 \otimes \alpha \text{ in } S \otimes_RS\right\}
$$
which is a kernel of an $R$-linear map 
$$
\sigma: S \to S \otimes_RS, \ \  \alpha \mapsto \alpha \otimes 1 - 1 \otimes \alpha.
$$
Then $R^*_S$ is an intermediate ring between $R$ and $S$, which is called {\it the strict closure} of $R$ in $S$. In particular, we denote by $R^*$, if $S=\overline{R}$, where $\overline{R}$ denotes the integral closure of $R$ in its total ring $\rmQ(R)$ of fractions. Since $[R^*]^*=R^*$ and $R^*\subseteq T^*$ if $T$ is an intermediate ring between $R$ and $\overline{R}$, $[-]^*$ is actually a closure operation. In contrast, since $R^*_S$ is a kernel of an $R$-linear map $\sigma$, the notion of strict closure is compatible with flat base changes. The reader may consult with \cite{L, C, EG, EGI} about properties of strict closures.   

The following proposition gives an upper bound for strict closures of rings, and we will use later in Section 3.

\begin{prop}[{\cite[Proposition 2.2]{EGI}}]\label{3.6}
Let $S/R$ be an extension of commutative rings. Then 
$$
R^*_S \subseteq R + MS \ \ \text{in} \ \ S
$$
for every $M \in \Max R$.

\end{prop}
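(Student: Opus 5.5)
The plan is to prove directly that, for every $M\in\Max R$, the composite
$$
\sigma_M:\ S\xrightarrow{\ \sigma\ } S\otimes_R S \longrightarrow (S/MS)\otimes_{R/M}(S/MS)
$$
kills exactly the elements of $R+MS$. Since $R^*_S=\operatorname{Ker}\sigma$, the kernel of $\sigma$ is contained in the kernel of $\sigma_M$, and the inclusion $R^*_S\subseteq R+MS$ follows.

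First I would identify the target. Put $k=R/M$, which is a field because $M$ is maximal. There are canonical isomorphisms
$$
(S\otimes_R S)\otimes_R k\;\cong\;(S\otimes_R k)\otimes_k(S\otimes_R k)=\bar S\otimes_k\bar S,
$$
where $\bar S=S/MS$ is a $k$-algebra. Under these isomorphisms, the image of $\alpha\otimes1-1\otimes\alpha$ is $\bar\alpha\otimes1-1\otimes\bar\alpha$. So if $\alpha\in R^*_S$, then $\bar\alpha\otimes 1=1\otimes\bar\alpha$ in $\bar S\otimes_k\bar S$.

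The key step, and the only one with real content, is the following fact about vector spaces over a field: if $V$ is a $k$-vector space containing $1\neq0$ and $v\in V$ satisfies $v\otimes1=1\otimes v$ in $V\otimes_k V$, then $v\in k\cdot1$. To prove it, I would extend $\{1\}$ to a $k$-basis $\{1\}\cup\{e_i\}$ of $V$ and write $v=c+\sum_i c_ie_i$. Then the tensors $1\otimes1$, $e_i\otimes1$ and $1\otimes e_i$ are linearly independent in $V\otimes_k V$. Comparing coefficients of $e_i\otimes1$ in $v\otimes1=1\otimes v$ gives $c_i=0$ for every $i$, so $v=c\cdot 1$.

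Now take $V=\bar S$. If $\bar S=0$, that is $MS=S$, there is nothing to prove. Otherwise $1\neq0$ in $\bar S$, and the fact gives $\bar\alpha\in k\cdot\bar1$, which is the image of $R$ in $\bar S$. Hence $\alpha\in R+MS$. I expect no serious obstacle. The only care needed is checking the base-change isomorphism of tensor products, which is standard associativity and commutativity of $\otimes$, and treating the degenerate case $MS=S$ separately.
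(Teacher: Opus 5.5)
Your proof is correct and complete. The natural map $S\otimes_R S\to \bar S\otimes_k\bar S$, $x\otimes y\mapsto\bar x\otimes\bar y$, carries $\sigma(\alpha)$ to $\bar\alpha\otimes 1-1\otimes\bar\alpha$. Your basis argument with $\{\bar 1\}\cup\{e_i\}$, in which $e_i\otimes 1$ and $1\otimes e_i$ are distinct basis vectors of $\bar S\otimes_k\bar S$, then forces $\bar\alpha\in k\cdot\bar 1$, i.e.\ $\alpha\in R+MS$; the case $MS=S$ is trivial.

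The paper gives no argument of its own here and simply imports the statement from \cite[Proposition 2.2]{EGI}. Your route is the natural one: reduce modulo $M$ to the fact that a field is strictly closed in every nonzero algebra over it. That fact is an elementary instance of faithfully flat descent, namely exactness of $0\to k\to\bar S\to\bar S\otimes_k\bar S$. So nothing is missing.
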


Hence we have the following corollary.

\begin{cor}\label{2.5}
Suppose that $(R, \m)$ is a local ring. If $\m \overline{R}\subseteq R$, then $R=R^*$.
\end{cor}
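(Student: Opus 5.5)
We apply Proposition \ref{3.6} with $S=\overline{R}$ and $M=\m$, the unique maximal ideal of the local ring $R$. The inclusion then follows at once from the hypothesis $\m\overline{R}\subseteq R$.

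First, since $R^*$ is by definition $R^*_S$ for $S=\overline{R}$, Proposition \ref{3.6} gives
$$
R^* \subseteq R+\m\overline{R} \quad \text{in} \quad \overline{R}.
$$
Second, the hypothesis $\m\overline{R}\subseteq R$ yields $R+\m\overline{R}=R$, so $R^*\subseteq R$. Third, the reverse inclusion $R\subseteq R^*$ is automatic. Indeed, for $a\in R$ we have $a\otimes 1=1\otimes a$ in $\overline{R}\otimes_R\overline{R}$, because the tensor product is taken over $R$. Hence $R=R^*$.

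No step here is a real obstacle. The only points to check are that $R^*$ means $R^*_{\overline{R}}$, and that locality of $R$ is used only to make $\m$ a maximal ideal to which Proposition \ref{3.6} applies.
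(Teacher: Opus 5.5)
Your proposal is correct and is exactly the paper's argument: the corollary is stated as an immediate consequence of Proposition \ref{3.6}, obtained by taking $S=\overline{R}$ and $M=\m$ so that $R^*\subseteq R+\m\overline{R}=R$, with $R\subseteq R^*$ automatic.
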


\vspace{1em}

Second, we introduce the definition of Arf rings and some results based on \cite{L}, and explain the relationship between Arf property and strictly closedness. For an arbitrary commutative ring $R$, let $W(R)$ be the set of non-zerodivisors on $R$. We denote by $\calF_R$ the set of ideals in $R$ that contain a non-zerodivisor on $R$.

Let $I\in \calF_R$. We consider the tower of $R$-algebras as follows:
$$R\subseteq I:I\subseteq I^2:I^2\subseteq \cdots \subseteq I^n:I^n \subseteq \cdots \subseteq \rmQ(R), \ \text{where}\  I^n:I^n=\{a\in\rmQ(R) \mid aI^n\subseteq I^n\}.$$
We set $R^I=\bigcup_{n\ge0}[I^n:I^n]$. Then, $R^I$ is an intermediate ring between $R$ and $\rmQ(R)$. If $a\in I$ is a reduction of $I$, that is, $I^{r+1}=aI^r$ for some $r\ge 0$, then $R^I=I^n:I^n$ for any $n\ge r$ and we have 
$$R^I=R\left[ \tfrac{I}{a}  \right] = \frac{I^r}{a^r},\ \text{where} \  \frac{I}{a}= \{ \frac{x}{a} \mid x\in I \}\subseteq \rmQ(R).$$
In particular, the following holds true.

\begin{lem}[c.f. {\cite[Lemma 1.11.]{L}}]\label{2.1}
Let $I\in\calF_R$. Suppose that there exists $a\in I$ and $r\ge0$ such that $I^{r+1}=aI^r$. Then, the following conditions are equivalent.
\begin{enumerate}[$(1)$]
\item
$R^I=I:I.$
\item
$I:I=\frac{I}{a}.$
\item
$I^2=aI.$
\end{enumerate}
Therefore, condition $(3)$ is independent of the choice of reductions $a\in I$. An ideal $I\in \calF_R$ that satisfies condition $(3)$ is called a stable ideal.
\end{lem}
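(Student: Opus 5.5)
We prove the cycle of implications $(1)\Rightarrow(2)\Rightarrow(3)\Rightarrow(1)$, with $a\in I$ non-zerodivisor and $I^{r+1}=aI^r$ throughout. Everything rests on the description $R^I=I^n:I^n=\frac{I^r}{a^r}$ for $n\ge r$, together with the fact that $a$ is a unit in $\rmQ(R)$. In particular $I:I\subseteq R^I$ always holds, and $I\subseteq aR^I$.

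For $(3)\Rightarrow(2)$: assume $I^2=aI$. If $x\in I$, then $\frac{x}{a}I\subseteq \frac{1}{a}I^2=I$, so $\frac{I}{a}\subseteq I:I$. Conversely, if $y\in I:I$, then $ya\in I$, hence $y=\frac{ya}{a}\in\frac{I}{a}$. Thus $I:I=\frac{I}{a}$.

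For $(2)\Rightarrow(1)$: assume $I:I=\frac{I}{a}$. Then $\frac{I}{a}$ is a ring, since $I:I$ is one, and it contains $R$. It is therefore closed under products, so $\bigl(\frac{I}{a}\bigr)^n\subseteq\frac{I}{a}$ for all $n$. Consequently $R\left[\frac{I}{a}\right]=\frac{I}{a}=I:I$. Since $R^I=R\left[\frac{I}{a}\right]$ by the reduction hypothesis, we get $R^I=I:I$.

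For $(1)\Rightarrow(3)$: assume $R^I=I:I$. Take $x\in I$. Since $x\in I\subseteq I^r\cdot\frac{1}{a^{r-1}}$ when $r\ge1$, we have $\frac{x}{a}\in\frac{I^r}{a^r}=R^I$; when $r=0$ this is immediate because $I=aR$. Hence $\frac{x}{a}\in I:I$, so $\frac{x}{a}I\subseteq I$, i.e.\ $xI\subseteq aI$. This gives $I^2\subseteq aI$, and the reverse inclusion $aI\subseteq I^2$ is trivial. So $I^2=aI$.

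The containment $\frac{I}{a}\subseteq\frac{I^r}{a^r}$ used above comes from $aI^{r-1}\subseteq I^r$. More precisely, $\frac{x}{a}=\frac{xa^{r-1}}{a^r}$ and $xa^{r-1}\in I^r$. This is the only point needing care, and it is elementary.

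Finally, condition $(1)$ does not mention $a$ at all. Hence condition $(3)$ holds for one reduction $a$ if and only if it holds for any other reduction of $I$, which gives the independence claim.
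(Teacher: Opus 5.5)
Your proof is correct and follows the route the paper has in mind: the paper gives no separate proof but derives the lemma directly from the description $R^I=R\left[\frac{I}{a}\right]=\frac{I^r}{a^r}$ stated just before it, and that description is exactly what drives your three implications and the independence claim. The only thing you use without comment is that $a$ is a non-zerodivisor, which does follow from the hypotheses: if $b\in I$ is a non-zerodivisor, then $b^{r+1}\in I^{r+1}=aI^r$, so $a$ divides a non-zerodivisor.
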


In what follows, unless otherwise specified, we assume that $R$  is a Cohen-Macaulay semi-local ring of dimension one with $\dim R_M=1$ for every $M\in\Max R$.
Under this assumption, the notion of Arf rings is defined as follows. We set $\calX_R=\{I\in\calF_R \mid \overline{I}=I \}$.

\begin{defn}[\cite{L}]\label{2.2}
$R$ is called an {\it Arf ring} if the following conditions hold:
\begin{enumerate}[$(1)$]
\item
For every $I\in\calX_R$, there exists an element $a\in I$ such that $I=\overline{aR}$, that is, $I^{n+1}=aI^n$ for some $n\ge0$.  
\item
If $x, y, z \in R$ such that $x\in W(R)$ and $\frac{y}{x}, \frac{z}{x}\in \overline{R}$, then $\frac{yz}{x}\in R$.
\end{enumerate}
\end{defn}

Arf rings are characterized in terms of the stability of the integrally closed ideals in $\calX_R$.

\begin{thm} [{\cite[Theorem 2.2]{L}}]\label{2.3}
The following conditions are equivalent.
\begin{enumerate}[$(1)$]
\item
$R$ is an Arf ring.
\item
For every $I\in \calX_R$, $I$ is stable.

\end{enumerate}
\end{thm}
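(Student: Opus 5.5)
The plan is to prove the two implications separately, using Lemma \ref{2.1} to pass between different reductions of the same ideal. Throughout I will use a standard fact about integral closures of ideals. For $I \in \calF_R$ we have $\overline{I}=I\overline{R}\cap R$. In particular, for a non-zerodivisor $x$,
$$
\overline{xR}=\{\, y\in R \mid \tfrac{y}{x}\in \overline{R}\,\}.
$$
I will also use this: if $J\subseteq I$ are ideals with $I$ finitely generated and integral over $J=aR$, then $I^{m+1}=aI^m$ for some $m\ge 0$. The proof is the usual finiteness argument, via $R[It]$ being finite over $R[at]$ or via the determinant trick. Confirming both facts in the present generality is the first preliminary step.

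\textbf{(2) $\Rightarrow$ (1).} Assume every $I\in\calX_R$ is stable. Condition (1) of Definition \ref{2.2} is then immediate: stability provides $a\in I$ with $I^2=aI$, so we may take $n=1$. For condition (2), let $x\in W(R)$ and $y,z\in R$ with $\frac{y}{x},\frac{z}{x}\in\overline{R}$, and put $I=\overline{xR}$. This ideal is integrally closed and contains $x$, so $I\in\calX_R$, and by the description above $y,z\in I$. Since $I$ is integral over $xR$ and finitely generated, $I^{m+1}=xI^m$ for some $m$, so $x$ is a reduction of $I$. As $I$ is stable, Lemma \ref{2.1} (independence of the reduction) gives $I^2=xI$. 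Hence $yz\in I^2=xI\subseteq xR$, that is, $\frac{yz}{x}\in R$.

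\textbf{(1) $\Rightarrow$ (2).} Assume $R$ is Arf and let $I\in\calX_R$. By Definition \ref{2.2}(1) there are $a\in I$ and $n\ge 0$ with $I^{n+1}=aI^n$; in particular $a\in W(R)$, since $I$ contains a non-zerodivisor. Let $y,z\in I$. Then $yI^n\subseteq aI^n$, so $\frac{y}{a}\in I^n:I^n$. Because $I^n$ is a finitely generated faithful module (it contains a non-zerodivisor), the determinant trick shows $\frac{y}{a}\in\overline{R}$, and likewise $\frac{z}{a}\in\overline{R}$. Definition \ref{2.2}(2) then gives $\frac{yz}{a}\in R$. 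Moreover $\frac{yz}{a}=y\cdot\frac{z}{a}\in I\overline{R}\cap R=\overline{I}=I$. Therefore $yz\in aI$, which gives $I^2=aI$, so $I$ is stable.

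The step I expect to be the main obstacle is the foundational identity $\overline{I}=I\overline{R}\cap R$, together with the finiteness statement that integrality over $xR$ yields $I^{m+1}=xI^m$. Both are needed in this semi-local, not necessarily reduced setting, where $\overline{R}$ may fail to be module-finite. For the first, I would argue elementwise. If $y\in R$ satisfies an equation of integral dependence over $I$, then $\frac{y}{a}$ is integral over $R$, where $a$ is a reduction (or where one uses $x$ directly in the principal case). Conversely, if $y=\sum b_i c_i$ with $b_i\in I$ and $c_i\in\overline{R}$, then I pass to the finitely generated ring $R[c_1,\dots,c_k]$ and apply the determinant trick there. Everything else is formal once these facts are in place.
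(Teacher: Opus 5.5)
The paper gives no proof of this statement. It quotes the result from Lipman [L, Theorem 2.2] and uses it as a black box, so there is no in-paper argument to compare your proposal against.

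Your argument is correct and complete relative to the paper's Definition 2.2 and Lemma 2.1.
\begin{itemize}
\item In $(2)\Rightarrow(1)$ the key points all hold. First, $\overline{xR}\in\calX_R$. Second, $x$ is a reduction of $\overline{xR}$. Third, Lemma 2.1 transfers $I^2=bI$ for one reduction $b$ to $I^2=xI$ for the reduction $x$.
\item In $(1)\Rightarrow(2)$ the deduction $yz\in aI$, and hence $I^2=aI$, is sound.
\end{itemize}

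The step you single out as the main obstacle, the general identity $\overline{I}=I\overline{R}\cap R$ for $I\in\calF_R$, is not needed in full.

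You only ever use two facts. The first is the principal case: $\overline{xR}=\{y\in R\mid y/x\in\overline{R}\}$ for $x\in W(R)$. This follows at once by multiplying or dividing an equation of integral dependence by $x^n$ in $\rmQ(R)$, where $x$ is a unit. No reducedness or module-finiteness of $\overline{R}$ is involved. The second is the standard fact that, for $J\subseteq I$ with $I$ finitely generated, $J$ is a reduction of $I$ if and only if $I\subseteq\overline{J}$.

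In $(1)\Rightarrow(2)$ you can avoid $I\overline{R}\cap R=\overline{I}$ altogether:
\begin{itemize}
\item Since $I^{n+1}=aI^n$, you have $I\subseteq\overline{aR}$, so $I=\overline{I}=\overline{aR}$. This gives $\frac{y}{a},\frac{z}{a}\in\overline{R}$ directly, without the determinant trick on $I^n$.
\item Arf's condition then gives $\frac{yz}{a}\in R$.
\item Moreover $\frac{yz/a}{a}=\frac{y}{a}\cdot\frac{z}{a}\in\overline{R}$, so $\frac{yz}{a}\in\overline{aR}=I$.
\end{itemize}
With this, the whole proof rests only on the elementary principal case and the reduction criterion.
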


For a Noetherian local ring $R$,  $v(R)$ denotes the embedding dimension of $R$, and $\rme(R)$ denotes the multiplicity of $R$ with respect to the maximal ideal. When $R$ is an Arf ring, the equation $v(R_M)=\rme(R_M)$ holds for every $M\in\Max R$ (i.e., $R_M$ has minimal multiplicity), because $M^2=aM$ for some $a\in M$. In contrast, even if $R_M$ has minimal multiplicity for every $M\in \Max R$, $R$ is not necessarily an Arf ring.

The strictly closedness is strongly related to the Arf property as follows. The following theorem was proved in \cite{L} when $R$ contains a field ({\cite[Proposition 4.5, Theorem 4.6]{L}}), and was settled in \cite{C} for its general case ({\cite[Theorem 4.4]{C}}).

\begin{thm}[\cite{L, C}]\label{2.4}
The following conditions are equivalent.
\begin{enumerate}[$(1)$]
\item
$R=R^*$ in $\overline{R}$.
\item
$R$ is an Arf ring.
\end{enumerate}

\end{thm}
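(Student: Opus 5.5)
The plan is to prove the two implications separately. First reduce to the local case, and where needed to an infinite residue field. Both strict closedness and the Arf conditions should be compatible with the faithfully flat base changes $R\to R_M$ and $R\to R(t)=R[t]_{\m R[t]}$, because $R^*$ is the kernel of the $R$-linear map $\sigma$, and colon ideals $I:I$ and the equalities $I^2=aI$ commute with flat extensions and descend along faithfully flat ones. The descent of condition $(1)$ of Definition \ref{2.2} from $R(t)$ to $R$ needs some care, since the principal reduction may only exist after the base change. I would handle it through Theorem \ref{2.3} and Lemma \ref{2.1}, phrasing stability as $R^I=I:I$, which makes no reference to a chosen reduction.

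\textbf{(1) $\Rightarrow$ (2).} Assume $R=R^*$. Condition $(2)$ of Definition \ref{2.2} follows from a direct tensor computation. Let $x\in W(R)$ and $y,z\in R$ with $\alpha=\frac{y}{x}$ and $\beta=\frac{z}{x}$ in $\overline{R}$. Since $x\alpha=y\in R$ and $x\beta=z\in R$, in $\overline{R}\otimes_R\overline{R}$ we get
$$x\alpha\beta\otimes 1=\beta\otimes y=\beta\otimes x\alpha=z\otimes\alpha=1\otimes z\alpha=1\otimes x\alpha\beta .$$
Hence $\frac{yz}{x}=x\alpha\beta\in R^*=R$.

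For condition $(1)$ of Definition \ref{2.2}, pass to $R(t)$, which has infinite residue fields. In a one-dimensional Cohen--Macaulay semi-local ring with infinite residue fields, every $I\in\calF_R$ has a principal reduction $a$, chosen by prime avoidance over the finitely many maximal ideals. Then $\overline{I}=\overline{aR}$, so $I=\overline{aR}$ when $I\in\calX_R$. To get back to $R$, I would use Theorem \ref{2.3}. Both Arf conditions hold for $R(t)$ (condition $(2)$ by the computation above, since $R(t)$ is again strictly closed by flat base change), so every integrally closed open ideal of $R(t)$ is stable. Finally, $I\in\calX_R$ extends to an integrally closed ideal $IR(t)$, whose stability descends to $I$.

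\textbf{(2) $\Rightarrow$ (1).} Assume $R$ is Arf, so in particular $\m^2=a\m$ for each maximal ideal. After localizing, let $(R,\m)$ be local and $R_1=\m:\m=\frac{\m}{a}$. The standard facts I need are that $R_1$ is again Arf and that $\m R_1=aR_1=\m$. Take $\alpha\in R^*$. Proposition \ref{3.6} gives $\alpha\in R+\m\overline{R}$. Also $R^*\subseteq R_1^*$, so applying the same bound to $R_1$ and iterating along the chain of blow-ups $R_n$ gives
$$\alpha\in R+\m_0\m_1\cdots\m_{n-1}\overline{R}\quad\text{for every } n .$$
The Arf hypothesis enters at each step through $\m_iR_{i+1}=a_iR_{i+1}\subseteq \m_i\subseteq R_i$, which lets me absorb the intermediate rings back into $R$.

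The main obstacle is passing from these bounds for all $n$ to $\alpha\in R$, particularly when $\overline{R}$ is not module-finite. I would first try a Krull-intersection type argument. Write $\alpha=\frac{b}{c}$ with $c\in W(R)$. Since $\m_0\cdots\m_{n-1}\overline{R}=a_0\cdots a_{n-1}\overline{R}$ and the valuations of $a_0\cdots a_{n-1}$ grow without bound, I would expect $\m_0\cdots\m_{n-1}\overline{R}\cap R[\alpha]\subseteq cR$ for $n$ large. The goal would then be to show that $\alpha$ lies in $R+(\text{something inside } \m^kR)$ for arbitrarily large $k$, conclude $\alpha\in\bigcap_k(R+\m^k R[\alpha])$, and finish because $R[\alpha]$ is a finite $R$-module. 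Should this fail in the analytically ramified case, the fallback would be to argue in the $\m$-adic completion, where strict closure and the Arf property are preserved by flat base change.
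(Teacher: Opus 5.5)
Your direction (1)$\Rightarrow$(2) is fine. The tensor identity correctly gives condition (2) of Definition \ref{2.2}, and the detour through $R(t)$ and Theorem \ref{2.3} is acceptable, using the standard fact $\overline{R(t)}=\overline{R}\otimes_RR(t)$. Alternatively, Claim \ref{3.8} gives $IR^I\subseteq R^*=R$, hence $IR^I\subseteq a\overline{R}\cap R=I$ and $R^I=I:I$ directly. The paper only quotes Theorem \ref{2.4} from Lipman and Castellanos.

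Your (2)$\Rightarrow$(1), however, has a genuine gap at the iteration step. From $\alpha\in R^*\subseteq R_1^*\subseteq\cdots$ and Proposition \ref{3.6} you only get $\alpha\in\bigcap_i(R_i+\m_i\overline{R})$. This is much weaker than $\alpha\in R+\m_0\cdots\m_{n-1}\overline{R}$. For the Arf ring $R=k[[t^3,t^5,t^7]]$, with $R_1=k[[t^2,t^3]]$ and $R_2=k[[t]]$, the intersection is $k+t^3k[[t]]$, which contains $t^4\notin R$, whereas $R+\m_0\m_1\overline{R}=R$. To get the product bound you must write $\alpha=r+a_0\beta$ and show that $\beta$, not $\alpha$, is strictly closed over $R_1$. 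But $a_0\beta\in R^*$ only says $a_0(\beta\otimes1-1\otimes\beta)=0$ in a tensor product that has $a_0$-torsion, so $a_0$ cannot be cancelled. The identity $\m_iR_{i+1}=a_iR_{i+1}\subseteq R_i$ does not help here. The missing idea is Castellanos' Lemma \ref{3.5}: for $A$ module-finite with $R_1\subseteq A\subseteq\overline{R}$, the kernel of $A\otimes_RA\to A\otimes_{R_1}A$ is exactly $(0):_{A\otimes_RA}a_0$. Hence $\beta\otimes1-1\otimes\beta$ vanishes in $A\otimes_{R_1}A$, i.e. $\beta\in[R_1]^*_A$.

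Since that lemma needs $A$ module-finite, you should not work in $\overline{R}$ at all. Your ``main obstacle'' is in fact fatal as you formulate it. Take the complete Arf ring $R=k[[x,y]]/(y^2)$ (multiplicity two). Then $\overline{R}=k[[x]]+yk((x))$ and $\m_0\cdots\m_{n-1}\overline{R}=x^n\overline{R}\supseteq yk((x))$, so $R+x^n\overline{R}=\overline{R}$ for every $n$. The bounds carry no information, $\alpha=y/x$ violates your expected containment $x^n\overline{R}\cap R[\alpha]\subseteq xR$, and passing to the completion changes nothing.

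The working route replaces the limit by Theorem \ref{3.1}: $R^*=\bigcup_nR^*_{R_n}$, with each $R_n$ module-finite.
\begin{enumerate}[(i)]
\item Fix $n$ and $x\in R^*_{R_n}$. Since $\m R_n=aR_n$, Proposition \ref{3.6} gives $x=r+ay$ with $r\in R$ and $y\in R_n$.
\item Lemma \ref{3.5} with $A=R_n$ gives $y\in[R_1]^*_{R_n}=[R_1]^*_{[R_1]_{n-1}}$.
\item Because $R_1$ is again Arf, induction on $n$ gives $[R_1]^*_{[R_1]_{n-1}}=R_1$.
\item Hence $x\in R+aR_1=R+\m=R$.
\end{enumerate}
This is exactly Proposition \ref{3.7} in the special case $R_{(n)}=R$. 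That case holds for Arf $R$ because $\m_iR_{i+1}=\m_i$ at every stage.
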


In Section \ref{sec3}, we consider the construction of $R^*$ when $R$ is not necessarily an Arf ring.

\section{Construction of $R^*$}\label{sec3}

In this section, we explore the construction of $R^*$. Let $R$ be an arbitrary commutative ring. We take a tower of rings $R=R_0\subseteq R_1 \subseteq \cdots \subseteq R_n \subseteq \cdots \subseteq \overline{R}$  such that $\underset{n\ge 0}{\bigcup}R_{n}=\overline{R}$. We begin with the following theorem, which plays a key role in this section. The idea for the proof of this theorem has already been written in \cite[Proof of Theorem 4.6.]{L}.

\begin{thm}\label{3.1}
$R^*=\underset{n\ge 0}{\bigcup}R_{R_n}^*$.

\end{thm}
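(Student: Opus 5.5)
We prove the two inclusions separately. Throughout, $R^*_{R_n}$ denotes the strict closure of $R$ in $R_n$, that is, the kernel of $\sigma_n\colon R_n\to R_n\otimes_R R_n$, $\alpha\mapsto\alpha\otimes1-1\otimes\alpha$.

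For the inclusion $\supseteq$, fix $n$ and $\alpha\in R^*_{R_n}$, so $\alpha\otimes 1=1\otimes\alpha$ in $R_n\otimes_R R_n$. The inclusion $R_n\subseteq\overline{R}$ induces an $R$-algebra map $R_n\otimes_R R_n\to\overline{R}\otimes_R\overline{R}$. Applying it gives $\alpha\otimes1=1\otimes\alpha$ in $\overline{R}\otimes_R\overline{R}$. Hence $\alpha\in R^*$, and this step is purely functorial.

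For the inclusion $\subseteq$, take $\alpha\in R^*$. Then $\alpha\in R_m$ for some $m$, and $\alpha\otimes 1-1\otimes\alpha$ maps to $0$ in $\overline{R}\otimes_R\overline{R}$. The key fact is that tensor products commute with filtered direct limits:
\[
\overline{R}\otimes_R\overline{R}=\varinjlim_n\,(R_n\otimes_R R_n),
\]
where the limit is taken over the chain $R_0\subseteq R_1\subseteq\cdots$ with its union $\overline{R}=\varinjlim R_n$. I would verify this by writing $\overline{R}\otimes\overline{R}=\varinjlim_{(i,j)}R_i\otimes R_j$ and noting that the diagonal $\{(n,n)\}$ is cofinal in $\mathbb{N}\times\mathbb{N}$. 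In a filtered direct limit of modules, an element of a stage that maps to zero in the limit already vanishes at some later stage. So there is $n\ge m$ such that the image of $\alpha\otimes1-1\otimes\alpha$ under $R_m\otimes R_m\to R_n\otimes R_n$ is zero. That image is exactly $\alpha\otimes1-1\otimes\alpha$ computed in $R_n\otimes_R R_n$. Therefore $\alpha\in R^*_{R_n}$.

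The main point to check carefully is the direct-limit step. Since the maps $R_m\otimes R_m\to R_n\otimes R_n$ need not be injective (tensor products are not left exact), one must not argue via submodules of $\overline{R}\otimes_R\overline{R}$. One has to use the standard description of the kernel of the map from a stage to the colimit, namely elements that die at some later stage. With that in place, both inclusions follow, and $R^*=\bigcup_{n\ge0}R^*_{R_n}$.
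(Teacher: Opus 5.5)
Your proof is correct and is essentially the paper's argument. The paper takes the direct limit of the exact sequences $0\to R^*_{R_n}\to R_n\to R_n\otimes_R R_n$ and identifies the limit of the $R_n\otimes_R R_n$ with $\overline{R}\otimes_R\overline{R}$; your elementwise version spells out the same two facts, namely exactness of filtered direct limits (an element killed in the limit already dies at a later stage) and the cofinality of the diagonal, which the paper uses only implicitly.
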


\begin{proof}
Because $R^*_{R_n}$ is a kernel of an $R$-linear map  $\sigma : R_n \to R_n \otimes_R R_n$ defined by $\sigma (\alpha) = \alpha \otimes 1 - 1 \otimes \alpha$, we have the exact sequence 
$$0 \to R^*_{R_n} \to R_n \overset{\sigma} \to R_n \otimes_R R_n$$
of $R$-modules for every $n\ge 0$. By taking the direct limit to these exact sequences, we get the commutative diagram 
$$
\xymatrix{
0 \ar[r] &  \underset{n\to \infty}{\lim}R^*_{R_n} \ar[r]^{}\ar@{=}[d] & \underset{n\to \infty}{\lim}R_n \ar[r]\ar@{=}[d]^{} & \underset{n\to \infty}{\lim}(R_n\otimes_R R_n)  \ar[d]^{\wr}  \\
0 \ar[r] & \underset{n\ge 0}{\bigcup}R^*_{R_n} \ar[r]^{}  & \overline{R} \ar[r]^{\sigma} & \overline{R}\otimes_R \overline{R} 
}
$$
of $R$-modules, since $\underset{n\to \infty}{\lim} R_n= \underset{n\ge 0}{\bigcup}R_{n}=\overline{R}$. This implies that $R^*=\underset{n\ge 0}{\bigcup}R_{R_n}^*$.
\end{proof}

Therefore, in order to compute $R^*$, we have only to compute $R^*_{R_n}$ for some tower $R=R_0\subseteq R_1 \subseteq \cdots \subseteq R_n \subseteq \cdots \subseteq \overline{R}$ of rings which satisfies $\underset{n\ge 0}{\bigcup}R_{n}=\overline{R}$.

From now on, let  $R$ be a Cohen-Macaulay semi-local ring of dimension one with $\dim R_M=1$ for every $M\in\Max R$, and we denote by $\operatorname{J}(R)$ as the Jacobson radical of $R$. 
We set $R_1=R^{\operatorname{J}(R)}$ and define recursively 
\begin{center}
$R_n=$
$
\begin{cases}
\  R & \ \ (n=0)\\
\ [R_{n-1}]_1 & \ \ (n>0)\\
\end{cases}
$
\end{center}
for each $n\ge 0$. Then, we get the tower
$$R=R_0\subseteq R_1\subseteq \cdots \subseteq R_n \subseteq \cdots \subseteq \overline{R}$$
of rings, and it is known that $\underset{n\ge 0}{\bigcup}R_{n}=\overline{R}$ holds (\cite[Proof of Theorem 4.6.]{L}).

In contrast, 
we set $\m_n=\operatorname{J}(R_n)$ and
\begin{center}
$R_{(n)}=$
$
\begin{cases}
\  R & \ \ (n=0)\\
\ R+\m_0R_1+\cdots + \m_0\m_1\cdots \m_{n-1}R_n & \ \ (n>0)\\
\end{cases}
$
\end{center}

for each $n\ge 0$.
Because $\m_n$ is an open ideal in $R_n$, there exists an element $\alpha_n\in R_{n+1}$ such that $\m_n R_{n+1}= \alpha_n R_{n+1}$ for each $n\ge 0$ (\cite[Proposition 1.1.]{L}), and we can write
\begin{eqnarray*}
R_{(n)} &=& R+\m_0R_1+\cdots + \m_0\m_1\cdots \m_{n-1}R_n\\
&=& R+\alpha_0R_1+\cdots + \alpha_0\m_1\cdots \m_{n-1}R_n\\
&=& R+\alpha_0(R_1+\cdots + \m_1\cdots \m_{n-1}R_n)\\
&=& R+\alpha_0\cdot [R_1]_{(n-1)}\ \ (n\ge 1).
\end{eqnarray*}

The purpose of this section is to prove the following, which is the main result of this paper. 

\begin{thm}\label{main}

$R^*_{R_n}=R_{(n)}$ for every $n\ge 0$. Therefore, we have $R^*=\underset{n\ge 0}{\bigcup}R_{(n)}$.

\end{thm}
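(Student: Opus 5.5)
Our plan is to prove $R^*_{R_n}=R_{(n)}$ by induction on $n$, using the recursion $R_{(n)}=R+\alpha_0\,[R_1]_{(n-1)}$ established above. The second assertion then follows at once from Theorem \ref{3.1}, since $\bigcup_n R_n=\overline{R}$. The case $n=0$ is trivial: $R^*_R=R$.

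\textbf{Inclusion $R_{(n)}\subseteq R^*_{R_n}$.} Since $R^*_{R_n}$ is a ring containing $R$, it suffices to show $\m_0\m_1\cdots\m_{k-1}R_k\subseteq R^*_{R_n}$ for $k\le n$. The first step is the base case $\m_0R_1\subseteq R^*_{R_1}$. Take a reduction $a\in \m_0$ with $R_1=\m_0^r/a^r$. For $x\in\m_0$ and $y=z/a^r$ with $z\in \m_0^r$, write $xy=(x/a)\cdot a\cdot y$. We have $ay=z/a^{r-1}$, and we want to check that
\[
xy\otimes 1=1\otimes xy \quad\text{in } R_1\otimes_R R_1 .
\]
The idea is to move the elements of $\m_0$ across the tensor sign, using that $\m_0^{r}=a\m_0^{r-1}$ (and more generally $\m_0^{r+j}=a^{j+1}\m_0^{r-1}$). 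Concretely, write $x y\otimes 1 = (x/a^{r})\cdot z\otimes 1$. Then express $z=\sum$ of products of elements of $\m_0$ lying in $R$, and pass them through the tensor sign one at a time, absorbing the denominators in $R_1=\m_0^r/a^r$. This is Lipman's argument that $\m R^{\m}$-type elements are strict.

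For the general step we use induction applied to the ring $R_1$ (which satisfies the same hypotheses). This gives $[R_1]_{(n-1)}\subseteq (R_1)^*_{R_n}$, where the strict closure is taken over $R_1$. Multiplying by $\alpha_0\in \m_0R_1$, we need $\alpha_0\beta\otimes1=1\otimes\alpha_0\beta$ over $R$ for $\beta$ strict over $R_1$. We will try to show that
\[
\alpha_0\beta\otimes 1-1\otimes\alpha_0\beta\in \ker\bigl(R_n\otimes_R R_n\to R_n\otimes_{R_1}R_n\bigr),
\]
and that this kernel, which is generated by elements $\gamma u\otimes v-u\otimes \gamma v$ with $\gamma\in R_1$, is killed after multiplying by $\m_0$ appropriately. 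The intended mechanism is $\m_0R_1\subseteq R^*_{R_1}$: elements of the form $\alpha_0\gamma$ can be moved across the tensor sign over $R$.

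\textbf{Inclusion $R^*_{R_n}\subseteq R_{(n)}$.} Here we intend to use Proposition \ref{3.6} with $S=R_n$. This gives $R^*_{R_n}\subseteq R+M R_n$ for each $M\in\Max R$. Intersecting over the finitely many maximal ideals (using CRT-type arguments in the semi-local case) should give $R^*_{R_n}\subseteq R+\m_0R_n=R+\alpha_0R_n$. Next, for $\alpha_0\beta\in R^*_{R_n}$ we want to deduce that $\beta$, or $\beta$ modulo $R_1$-stuff, lies in $(R_1)^*_{R_n}$. We would use the surjection $R_n\otimes_R R_n\to R_n\otimes_{R_1}R_n$ together with the fact that $\alpha_0$ is a non-zerodivisor on $R_n\otimes_{R_1}R_n$; the latter holds because $R_n$ is torsion-free and the blow-up makes $\alpha_0R_n$ invertible. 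This yields $\beta\in (R_1)^*_{R_n}+(\text{something in } R_1)=[R_1]_{(n-1)}$ by induction.

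The main obstacle is this cancellation step: passing from $\alpha_0\beta$ being strict over $R$ to $\beta$ being strict over $R_1$. The difficulty is that $\alpha_0$ need not be a non-zerodivisor on a tensor product over $R$. We would first try to reduce to the local case via flat base change (localizing and completing) and then use the stable principal ideal $\m_0R_1=\alpha_0R_1$ explicitly.
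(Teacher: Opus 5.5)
You were not asked to write a proof from scratch, so I compare your proposal with the paper's. The part proving $R_{(n)}\subseteq R^*_{R_n}$ is fine in substance; the reverse inclusion has a genuine gap.

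\textbf{The inclusion $R_{(n)}\subseteq R^*_{R_n}$.} Your route differs from the paper's. The paper applies Claim~\ref{3.8} at every level to the integrally closed ideal $\m_0\cdots\m_{n-1}$ of $R_{(n-1)}$, using Proposition~\ref{3.4}. Your route works, but not quite as you phrased it. Knowing that $\alpha_0\beta\otimes1-1\otimes\alpha_0\beta$ lies in the kernel of $R_n\otimes_RR_n\to R_n\otimes_{R_1}R_n$, and that $\m_0$ kills this kernel, says nothing about the element itself. You must factor first. Since $\alpha_0\in\m_0R_1\subseteq R^*_{R_n}$, one has $\alpha_0\beta\otimes1-1\otimes\alpha_0\beta=(\alpha_0\otimes1)(\beta\otimes1-1\otimes\beta)$. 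Moreover $\alpha_0\otimes1$ annihilates the kernel, because $(\alpha_0\otimes1)(\gamma\otimes1-1\otimes\gamma)=\alpha_0\gamma\otimes1-1\otimes\alpha_0\gamma=0$ for $\gamma\in R_1$. In the base case you also need an infinite residue field, so that $\m_0$ has a principal reduction $a\in R$.

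\textbf{The gap: the reverse inclusion.} Your cancellation step assumes $\alpha_0$ is a non-zerodivisor on $R_n\otimes_{R_1}R_n$, and this is false. Torsion-freeness of $R_n$ says nothing about torsion in a tensor product. Take $R=k[[t^2,t^5]]$, so that $R_1=k[[t^2,t^3]]$, $\alpha_0=t^2$ and $R_2=k[[t]]$.
\begin{itemize}
\item In $R_2\otimes_{R_1}R_2$ the element $\xi=t\otimes1-1\otimes t$ is nonzero. Indeed $\m_1R_2=t^2k[[t]]\subseteq R_1$, so $R_1$ is strictly closed by Corollary~\ref{2.5}, and $t\notin R_1$.
\item Yet $t^2\xi=t^3\otimes1-1\otimes t^3=0$.
\end{itemize}
So $\alpha_0(\beta\otimes1-1\otimes\beta)=0$ does not yield $\beta\in(R_1)^*_{R_n}$. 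Your last paragraph senses an obstacle but places it over $R$ rather than over $R_1$. Localizing or completing does not remove it.

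\textbf{The missing idea.} Do the cancellation over a base ring whose maximal ideal is stable, and use the exact sequence of Lemma~\ref{3.5} in place of a non-zerodivisor property. That lemma says: for $(R',\m')$ with $\m'^2=\alpha\m'$, the $\alpha$-torsion of $A\otimes_{R'}A$ is exactly the kernel of $A\otimes_{R'}A\to A\otimes_{R'_1}A$. The torsion does not vanish; it dies on passing to the blow-up.

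The paper realizes this by proving the stronger statement $[R_{(n)}]^*_{R_n}=R_{(n)}$ (Proposition~\ref{3.7}). After localizing, $R_{(n)}$ has maximal ideal $\alpha_0[R_1]_{(n-1)}$, its blow-up is $[R_1]_{(n-1)}$, and $\alpha_0\in R_{(n)}$.

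Your induction can be repaired the same way using $R_{(1)}$, whose maximal ideal is $\alpha_0R_1$ and whose blow-up is $R_1$.
\begin{itemize}
\item Your first inclusion gives $R\subseteq R_{(1)}\subseteq R^*_{R_n}$, hence $R^*_{R_n}=[R_{(1)}]^*_{R_n}$.
\item Proposition~\ref{3.6} over $R_{(1)}$ gives $[R_{(1)}]^*_{R_n}\subseteq R_{(1)}+\alpha_0R_n$.
\item For $x=x_1+\alpha_0y$ with $x_1\in R_{(1)}$ and $y\in R_n$, apply Lemma~\ref{3.5} with $R'=R_{(1)}$, $R'_1=R_1$ and $A=R_n$. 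This gives $y\in(R_1)^*_{R_n}$.
\item By induction applied to $R_1$, $(R_1)^*_{R_n}=[R_1]_{(n-1)}$, so $x\in R_{(n)}$.
\end{itemize}
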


In order to prove Theorem \ref{main}, we need some preliminaries. 
We begin with the following. 

\begin{lem}\label{3.3}
For each $n\ge 1$, the following assertions hold true.

\begin{enumerate}[$(1)$] 
\item
$\Max R_{(n)}=\{ M+ \alpha_0\cdot [R_1]_{(n-1)} \mid M\in \Max R \}$.
\item
$\operatorname{J}(R_{(n)})=\alpha_0\cdot [R_1]_{(n-1)}$.

\end{enumerate}

\end{lem}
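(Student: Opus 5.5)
Set $I=\alpha_0\cdot[R_1]_{(n-1)}$, so that $R_{(n)}=R+I$. The plan is to show that $I$ is an ideal of $R_{(n)}$ contained in the Jacobson radical, and that $R_{(n)}/I$ is a homomorphic image of $R$ whose maximal ideals come from $\Max R$.

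\emph{$I$ is an ideal of $R_{(n)}$.} Since $[R_1]_{(n-1)}$ is a ring, $I\cdot I\subseteq \alpha_0^2[R_1]_{(n-1)}\subseteq I$, because $\alpha_0\in R_1\subseteq [R_1]_{(n-1)}$. Also $R\subseteq R_1\subseteq [R_1]_{(n-1)}$, so $RI\subseteq I$. Hence $I$ is an ideal of $R_{(n)}=R+I$.

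\emph{The quotient.} The composite $R\to R_{(n)}\to R_{(n)}/I$ is surjective, and its kernel is $R\cap I$. Next we show $\m_0\subseteq R\cap I$: indeed $\m_0\subseteq \m_0R_1=\alpha_0R_1\subseteq I$. Conversely, $I\subseteq \m_0R_1+\cdots+\m_0\cdots\m_{n-1}R_n\subseteq \m_0\overline{R}$, which is contained in $\operatorname{J}(\overline{R})$ because $\overline{R}$ is integral over $R$ (lying over). Hence every element of $R\cap I$ lies in $R\cap\operatorname{J}(\overline{R})$, which equals $\operatorname{J}(R)=\m_0$ by lying over again. So $R\cap I=\m_0$, and therefore $R_{(n)}/I\cong R/\m_0=R/\operatorname{J}(R)\cong\prod_{M\in\Max R}R/M$.

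\emph{$I\subseteq\operatorname{J}(R_{(n)})$.} Since $R_{(n)}\subseteq\overline{R}$ is an integral extension, every maximal ideal of $R_{(n)}$ is a contraction of a maximal ideal of $\overline{R}$. Thus $\operatorname{J}(\overline{R})\cap R_{(n)}\subseteq \operatorname{J}(R_{(n)})$, and as $I\subseteq\m_0\overline{R}\subseteq\operatorname{J}(\overline{R})$, we get $I\subseteq\operatorname{J}(R_{(n)})$.

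\emph{Conclusion.} The maximal ideals of $R_{(n)}$ all contain $I$, so they correspond to the maximal ideals of $R_{(n)}/I\cong R/\m_0$. These are the preimages $M+I$ for $M\in\Max R$; note that $(M+I)\cap R=M$ since $M\supseteq\m_0=R\cap I$. This proves (1). Their intersection is the preimage of $\operatorname{J}(R/\m_0)=0$, which is $I$, proving (2).

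The main obstacle is the step $R\cap I=\m_0$, and specifically the inclusion $I\subseteq\operatorname{J}(\overline{R})$. This rests on the integrality of $\overline{R}$ over $R$, with $\overline{R}$ possibly not finite; lying over and going up still apply in that generality, so the argument goes through.
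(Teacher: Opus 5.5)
Your proof is correct and uses the same basic facts as the paper: $I=\alpha_0\cdot[R_1]_{(n-1)}\subseteq\m_0\overline{R}$ lies in every maximal ideal of $\overline{R}$, and every maximal ideal of $R_{(n)}$ is contracted from $\overline{R}$. Where you differ is in how these are organized, especially for (2).

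For (1), the paper writes each $N\in\Max R_{(n)}$ directly as $L\cap R_{(n)}=(L\cap R)+I$ with $L\in\Max\overline{R}$. For the converse it says $R/M\to R_{(n)}/(M+I)$ is surjective and hence an isomorphism. That step tacitly needs $M+I\neq R_{(n)}$.

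For (2), the paper gets $\jcb(R_{(n)})\supseteq I$ from (1). It then proves equality by localizing at each $M\in\Max R$, using $\alpha_0R_1=\m_0R_1$ to see that $(M+I)_M=I_M$.

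You instead compute $R\cap I=\m_0$ once, which gives $R_{(n)}/I\cong R/\jcb(R)$. This makes the properness of $M+I$ explicit. Together with $I\subseteq\jcb(R_{(n)})$, the correspondence theorem then yields (1) and (2) simultaneously, since $R/\jcb(R)$ has zero Jacobson radical.

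Your route replaces the localization computation with a single global identity. It also directly exhibits the isomorphism of residue rings mentioned in the remark following the lemma. The cost is the extra, standard check that $R\cap\jcb(\overline{R})=\jcb(R)$ for the integral extension $R\subseteq\overline{R}$.
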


\begin{proof}
$(1)$
Let $N\in \Max R_{(n)}$. Because $R\subseteq R_{(n)} \subseteq \overline{R}$, there exists a maximal ideal $L\in \Max \overline{R}$ such that $L\cap R_{(n)} =N$. Hence we have 
\begin{eqnarray*}
N=L\cap R_{(n)} &=& L\cap (R+\alpha_0\cdot [R_1]_{(n-1)})\\
&=& (L\cap R)+ \alpha_0\cdot [R_1]_{(n-1)}
\end{eqnarray*}
and $L\cap R\in \Max R$, where the third equality follows from the fact that $\alpha_0\cdot [R_1]_{(n-1)} \subseteq \m_0 \overline{R} \subseteq L$. 

Conversely, let $M\in \Max R$ and set $N=M+ \alpha_0\cdot [R_1]_{(n-1)}$. Then, $N$ is an ideal of $R_{(n)}$ and the natural map 
$$R/M \to R_{(n)}/N=R+\alpha_0\cdot [R_1]_{(n-1)}/M+\alpha_0\cdot [R_1]_{(n-1)}$$
is surjective, which implies that $R/M\cong R_{(n)}/N$ as rings and $N\in \Max R_{(n)}$.

$(2)$ 
We have $\operatorname{J}(R_{(n)})=\underset{M\in \Max R}{\bigcap} \left( M+\alpha_0\cdot [R_1]_{(n-1)} \right)\supseteq \alpha_0\cdot [R_1]_{(n-1)}$ by assertion $(1)$. Moreover, for each $M\in \Max R$, we have
\begin{eqnarray*}
\left[ \operatorname{J}(R_{(n)}) \right]_M &=& \left[ M+\alpha_0\cdot [R_1]_{(n-1)} \right]_M\\
&=& \left[ M+\m_0\cdot [R_1]_{(n-1)} \right]_M\\
&=& MR_M+ M\left[ [R_1]_{(n-1)}   \right]_M\\
&=& M\left[ [R_1]_{(n-1)}   \right]_M\\
&=& \left[\alpha_0\cdot [R_1]_{(n-1)} \right]_M,
\end{eqnarray*}

where the second equality follows from $\alpha_0 R_1=\m_0  R_1$. This concludes that $\operatorname{J}(R_{(n)})=\alpha_0\cdot [R_1]_{(n-1)}$, as desired. 
\end{proof}

\begin{rem}\label{rem}
If $R$ is a local ring, then so is $R_{(n)}$ and these residue class fields are isomorphic. Moreover, because $\operatorname{J}(R_{(n)})^2=\alpha_0 \operatorname{J}(R_{(n)})$, $R_{(n)}$ has minimal multiplicity. 
\end{rem}

We then have the following.

\begin{prop}\label{3.4}
$\m_0\cdots \m_n$ is an integrally closed ideal in $R_{(n)}$ for each $n\ge 0$.

\end{prop}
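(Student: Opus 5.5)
The plan is to prove the stronger identity
$$\m_0\m_1\cdots\m_n=\alpha_0\alpha_1\cdots\alpha_n\overline{R}\cap R_{(n)}\qquad(\ast)$$
by induction on $n$, applied simultaneously to all rings satisfying our standing hypotheses. Integral closedness will then follow quickly.

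First I check that $I=\m_0\cdots\m_n$ is an ideal of $R_{(n)}$ lying in $\calF_{R_{(n)}}$. Since $\m_n$ is an ideal of $R_n$, the product $I$ is an $R_n$-submodule of $R_n$. Because $R_{(n)}\subseteq R_n$, it is therefore an ideal of $R_{(n)}$. It contains a non-zerodivisor, since each $\m_k$ is open.

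Next I compute the extension of $I$ to $\overline{R}$. Using $\m_kR_{k+1}=\alpha_kR_{k+1}$ repeatedly gives $I\overline{R}=\alpha_0\cdots\alpha_n\overline{R}$. This is a principal ideal generated by a non-zerodivisor in the integrally closed ring $\overline{R}$, so it is integrally closed in $\overline{R}$. Now suppose $x\in R_{(n)}$ is integral over $I$. Then $x$ is integral over $I\overline{R}$, so $x\in\alpha_0\cdots\alpha_n\overline{R}\cap R_{(n)}$. Hence once $(\ast)$ holds, $\overline{I}=I$.

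The proof of $(\ast)$ is the main step. The inclusion ``$\subseteq$'' has already been shown. For ``$\supseteq$'', the base case $n=0$ is as follows: the ideal $\alpha_0\overline{R}\subseteq\m_0\overline{R}$ lies in every maximal ideal of $\overline{R}$. Maximal ideals of $\overline{R}$ contract to maximal ideals of $R$ (integral extension), and every maximal ideal of $R$ arises this way (lying over). Hence $\alpha_0\overline{R}\cap R\subseteq\operatorname{J}(R)=\m_0$.

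For $n\ge1$, take $y\in\alpha_0\cdots\alpha_n\overline{R}\cap R_{(n)}$. The same lying-over argument, now for $R_{(n)}\subseteq\overline{R}$, shows $y\in\operatorname{J}(R_{(n)})$. By Lemma \ref{3.3}(2) this gives $y=\alpha_0z$ with $z\in[R_1]_{(n-1)}$. Since $\alpha_0$ is a non-zerodivisor, $z\in\alpha_1\cdots\alpha_n\overline{R}$.

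I then apply the induction hypothesis to $R_1$ in place of $R$. For this I need two facts. First, $R_1$ again satisfies the standing hypotheses: it equals $\operatorname{J}(R)^r/a^r$, a module-finite extension of $R$ inside $\rmQ(R)$. Second, $\overline{R_1}=\overline{R}$, and the tower for $R_1$ is $R_1\subseteq R_2\subseteq\cdots$ with the same $\m_k$ and $\alpha_k$. The induction hypothesis then gives $z\in\m_1\cdots\m_n$. Therefore
$$y\in\alpha_0\m_1\cdots\m_n=(\alpha_0R_1)\m_1\cdots\m_n=(\m_0R_1)\m_1\cdots\m_n=\m_0\m_1\cdots\m_n,$$
where the last equality uses that $\m_1$ is an $R_1$-module.

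The step I expect to need the most care is the reduction to $R_1$. One must verify that the recursive constructions for $R_1$ coincide with the shifted ones for $R$, and that $R_1$ inherits being Cohen--Macaulay semi-local of dimension one with all localizations at maximal ideals one-dimensional. This should follow from $R_1$ being module-finite and torsion-free over $R$ inside $\rmQ(R)$.
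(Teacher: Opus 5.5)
Your proof is correct. Its skeleton matches the paper's: induction on $n$ over all rings satisfying the standing hypotheses, peeling off $\alpha_0$ via Lemma \ref{3.3}(2), and applying the induction hypothesis to $R_1$ and $[R_1]_{(n-1)}$.

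The difference lies in the key step. The paper first identifies $[R_{(n)}]_1=[R_1]_{(n-1)}$, using $\operatorname{J}(R_{(n)})^2=\alpha_0\operatorname{J}(R_{(n)})$ and Lemma \ref{2.1}. It then cites \cite[Lemma 2.4]{I} as a black box: multiplying an integrally closed ideal of the blow-up by $\alpha_0$ gives an integrally closed ideal of $R_{(n)}$. You instead replace integral closedness by the contraction identity $(\ast)$. You then verify the inductive step by hand, using only lying over, Lemma \ref{3.3}(2), and the fact that $I\overline{R}=\alpha_0\cdots\alpha_n\overline{R}$ is integrally closed in $\overline{R}$.

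Note that $(\ast)$ is not genuinely stronger than the proposition. Since $R_{(n)}\subseteq\overline{R}$ is integral, $\overline{I}=\overline{I\overline{R}}\cap R_{(n)}=\alpha_0\cdots\alpha_n\overline{R}\cap R_{(n)}$ always holds, so $(\ast)$ is equivalent to $I$ being integrally closed. Its real advantage is that membership in a contracted ideal is easier to divide by $\alpha_0$ than an equation of integral dependence. The paper's route is shorter given the external lemma. Yours is self-contained and makes the description $\m_0\cdots\m_n=\m_0\cdots\m_n\overline{R}\cap R_{(n)}$ explicit.

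Two small points need fixing.
\begin{enumerate}
\item Being an $R_n$-submodule of $R_n$ does not by itself put $I$ inside $R_{(n)}$. You need $I\subseteq\m_0\cdots\m_{n-1}R_n\subseteq R_{(n)}$, which is immediate from $\m_n\subseteq R_n$.
\item Writing $R_1=\operatorname{J}(R)^r/a^r$ presupposes a principal reduction $a$, which may not exist when the residue fields are finite. All you actually use is that $R_1$ is a module-finite extension of $R$ inside $\rmQ(R)$, and this holds in general by \cite[Proposition 1.1]{L}.
\end{enumerate}
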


\begin{proof}
We prove by induction on $n$. If $n=0$, the assertion is obvious. We may assume that $n>0$ and our assertion holds true for $n-1$. Because $\operatorname{J}(R_{(n)})=\alpha_0\cdot [R_1]_{(n-1)}$, we have $\operatorname{J}(R_{(n)})^2=\alpha_0\operatorname{J}(R_{(n)})$, so that 
$$[R_{(n)}]_1=[R_{(n)}]^{\operatorname{J}(R_{(n)})}=\alpha_0\cdot [R_1]_{(n-1)}:\alpha_0\cdot [R_1]_{(n-1)}=[R_1]_{(n-1)}$$ 
by Lemma \ref{2.1}.  In contrast, by the hypothesis of induction, $\m_1\cdots \m_n$ is an integrally closed ideal in $[R_1]_{(n-1)}$. Therefore, $\alpha_0\cdot \m_1\cdots \m_n=\m_0\m_1\cdots \m_n$ is an integrally closed ideal in $R_{(n)}$ by \cite[Lemma 2.4]{I}, as desired. 
\end{proof}

Here, we note the following lemma, which we need to prove Proposition \ref{3.7}.

\begin{lem}[{\cite[Lemma 4.3]{C}}]\label{3.5}
Let $(R, \m)$ be a one-dimensional Cohen-Macaulay local ring. Suppose that $\m^2 = \alpha\m$ for some $\alpha\in \m$. Set $R_1 = R^{\m} = \frac{\m}{\alpha}$.  Let $R_1 \subseteq A \subseteq \overline{R}$ be an intermediate ring such that $A$ is a finitely generated $R$-module, let $\tau : A\otimes_R A \to A \otimes_{R_1} A$ be an $R$-algebra map such that $\tau(x \otimes y) = x \otimes y$ for every $x, y \in A$. Then the sequence 
$$0\to (0):_{A \otimes_RA} \alpha \to A\otimes_R A\overset{\tau}{\to} A\otimes_{R_1} A \to 0$$
is exact.
\end{lem}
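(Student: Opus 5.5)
The plan is to identify the kernel $K$ of $\tau$ with the annihilator of $\alpha$ by proving two inclusions. First, surjectivity of $\tau$ is immediate, since the elements $x\otimes y$ generate $A\otimes_{R_1}A$. Next I describe $K$ explicitly: $A\otimes_{R_1}A$ is the quotient of $A\otimes_R A$ by the submodule generated by the elements $rx\otimes y-x\otimes ry$ with $r\in R_1$ and $x,y\in A$. Since $R_1=\frac{\m}{\alpha}$, every $r\in R_1$ has the form $r=\frac{m}{\alpha}$ with $m\in\m$.

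\emph{Step 1: $K\subseteq (0):_{A\otimes_RA}\alpha$.} I will construct an additive map $\varphi:A\otimes_{R_1}A\to A\otimes_RA$ with $\varphi(x\otimes y)=\alpha x\otimes y$. It is $R_1$-balanced: for $r\in R_1$ we have $\alpha r\in\m\subseteq R$, so
$$\alpha(rx)\otimes y=(\alpha r)x\otimes y=x\otimes(\alpha r)y=\alpha x\otimes ry.$$
Hence $\varphi$ is well defined, and $\varphi\circ\tau$ is multiplication by $\alpha$ on $A\otimes_RA$. Therefore every $\xi\in K$ satisfies $\alpha\xi=\varphi(\tau(\xi))=0$.

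\emph{Step 2: $(0):_{A\otimes_RA}\alpha\subseteq K$.} This is the step I expect to be the main obstacle. By symmetry, $\tau\circ\varphi$ is multiplication by $\alpha$ on $A\otimes_{R_1}A$. If I could show that $\alpha$ is a non-zerodivisor on $A\otimes_{R_1}A$, I would be done: from $\alpha\xi=0$ we get $\alpha\tau(\xi)=0$, hence $\tau(\xi)=0$. That is false in general, however, so I plan to proceed differently.

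I will first observe that $\tau$ becomes an isomorphism after inverting $\alpha$, because $R[\frac1\alpha]=R_1[\frac1\alpha]=\rmQ(R)$. Hence $K$ is exactly the $\alpha$-power torsion of $A\otimes_RA$ lying over the kernel of $R_1\otimes_RR_1\to R_1$. Then I will analyze this kernel via the base change
$$A\otimes_{R_1}A=(A\otimes_RA)\otimes_{R_1\otimes_RR_1}R_1 .$$
This reduces the question to the ideal $D=\ker(R_1\otimes_RR_1\to R_1)$, which is generated by the elements $\frac m\alpha\otimes1-1\otimes\frac m\alpha$ with $m\in\m$.

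The key computation is to write an $\alpha$-torsion element $\xi=\sum a_i\otimes b_i$ of $A\otimes_RA$ modulo $D(A\otimes_RA)$. Here I will use $\m A=\alpha A$, which follows from $\m=\alpha R_1$ and $R_1\subseteq A$, together with the hypothesis $\m^2=\alpha\m$. The goal is to show that the relation $\alpha\xi=0$ is generated by relations coming from $\m$, that is, from $R$-linearity applied to elements of $\alpha A$. Dividing such relations by $\alpha$ should turn them into the defining relations of $D$.

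Finite generation of $A$ over $R$ will be used to present $A$ as $R^{\oplus s}/U$ and to make this bookkeeping concrete. The torsion-freeness of $A$ ensures that no further $\alpha$-torsion arises from $A$ itself.
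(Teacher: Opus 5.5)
\textbf{Step 2 has a genuine gap.} Your Step 1 is correct: $\varphi$ is well defined because $\alpha r\in\m\subseteq R$ for $r\in R_1$, and $\varphi\circ\tau$ is multiplication by $\alpha$. You also locate the difficulty correctly, and you are right that $\alpha$ need not be a non-zerodivisor on $A\otimes_{R_1}A$. But the inclusion $(0):_{A\otimes_RA}\alpha\subseteq\ker\tau$, which is the real content of the lemma, is never proved:
\begin{itemize}
\item Inverting $\alpha$ only gives $K\subseteq\Gamma_\alpha(A\otimes_RA)$. This is weaker than what Step 1 already gives.
\item Your description of $K$ as ``exactly the $\alpha$-power torsion lying over $D$'' is neither justified nor needed.
\item Passing to $D=\ker(R_1\otimes_RR_1\to R_1)$ only restates that $\ker\tau$ is generated by the elements $rx\otimes y-x\otimes ry$.
\item The sentences ``the goal is to show\ldots'' and ``dividing such relations by $\alpha$ should turn them into\ldots'' describe what you hope will happen, not an argument.
\end{itemize}
Note that the paper itself gives no proof of this statement; it quotes \cite[Lemma 4.3]{C}.

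\textbf{The missing idea is to work with a \emph{minimal} system of generators $a_1,\dots,a_s$ of $A$ over the local ring $R$.} By Nakayama's lemma, the syzygy module $U\subseteq R^{\oplus s}$ of $(a_1,\dots,a_s)$ then lies in $\m^{\oplus s}=(\alpha R_1)^{\oplus s}$. This is exactly what makes ``dividing by $\alpha$'' legitimate inside $R_1$; your sketch never secures it.
\begin{enumerate}
\item Write any $\xi\in A\otimes_RA$ as $\xi=\sum_i a_i\otimes y_i$.
\item Tensor $U\to R^{\oplus s}\to A\to 0$ with $A$ and use right exactness. Then $\alpha\xi=0$ means $(\alpha y_i)_i=\sum_k z_ku^{(k)}$ for some $u^{(k)}\in U$ and $z_k\in A$.
\item Write $u^{(k)}_i=\alpha v^{(k)}_i$ with $v^{(k)}_i\in R_1$. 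Since $\alpha$ is a non-zerodivisor on $A\subseteq\rmQ(R)$, cancelling it gives $y_i=\sum_k v^{(k)}_iz_k$ and $\sum_i a_iv^{(k)}_i=0$.
\item Since the $v^{(k)}_i$ lie in $R_1$, they can be moved across $\otimes_{R_1}$. Hence $\tau(\xi)=\sum_k\bigl(\sum_i a_iv^{(k)}_i\bigr)\otimes z_k=0$.
\end{enumerate}
This short computation, using minimality, $\m=\alpha R_1$, $R_1\subseteq A$, and the fact that $\alpha$ is a non-zerodivisor on $A$, is what your ``bookkeeping'' paragraph needs to be replaced by. As written, the proposal does not establish the lemma.
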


We then have the following.

\begin{prop}\label{3.7}
$\left[ R_{(n)}\right]^*_{R_n}=R_{(n)}$ for every $n\ge 0$.
\end{prop}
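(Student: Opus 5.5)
Induction on $n$; the case $n=0$ is trivial because $[R]^*_{R}=R$. For $n\ge 1$, assume the statement for $n-1$ for every ring of the type under consideration, in particular for $R_1$. That is, we assume $\big[[R_1]_{(n-1)}\big]^*_{[R_1]_{n-1}}=[R_1]_{(n-1)}$, and note $[R_1]_{n-1}=R_n$. Write $A=R_n$, $T=R_{(n)}$ and $T_1=[R_1]_{(n-1)}$. The proof of Proposition \ref{3.4} shows $\operatorname{J}(T)=\alpha_0 T_1$, $\operatorname{J}(T)^2=\alpha_0\operatorname{J}(T)$ and $T^{\operatorname{J}(T)}=T_1$. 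Since $T\subseteq T_1\subseteq A$ and $A$ is module-finite over $R$ (a finite iterated blow-up of a one-dimensional CM semi-local ring), everything lives inside module-finite extensions.

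Lemma \ref{3.5} is local, so first reduce to the local case. Strict closure is a kernel, so it commutes with the flat base change $T\to T_N$ for $N\in\Max T$. Also $T_N$ has maximal ideal $\alpha_0[T_1]_N$ with $\operatorname{J}^2=\alpha_0\operatorname{J}$, by Remark \ref{rem} and Lemma \ref{3.3}. Membership in $T$ can be checked locally at the maximal ideals of $T$, so we may assume $(T,\mathfrak n)$ is local with $\mathfrak n^2=\alpha_0\mathfrak n$, $T^{\mathfrak n}=T_1=\mathfrak n/\alpha_0$, and $T_1\subseteq A\subseteq\overline{T}$ with $A$ finite.

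Now let $x\in T^*_A$, so $x\otimes1=1\otimes x$ in $A\otimes_T A$. Applying $\tau$ of Lemma \ref{3.5} gives $x\otimes1=1\otimes x$ in $A\otimes_{T_1}A$, i.e.\ $x\in [T_1]^*_A$. By the induction hypothesis $x\in T_1$. It remains to push $x$ from $T_1$ down to $T$, and I expect this to be the main obstacle. Because $x\in T_1=\mathfrak n/\alpha_0$, the element $\alpha_0x$ lies in $\mathfrak n\subseteq T$. The plan is to show that the residue class of $x$ modulo $\operatorname{J}(T_1)$ comes from the residue field of $T$. Then $x\in T+\operatorname{J}(T_1)$, and we must bootstrap further to $x\in T+\alpha_0T_1=T$, using $\mathfrak n=\alpha_0T_1$.

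To get these, I would exploit the kernel of $\tau$ in Lemma \ref{3.5}: the element $x\otimes1-1\otimes x$ vanishes in $A\otimes_T A$, not merely modulo $(0):\alpha_0$. First, Proposition \ref{3.6} applied to $T\subseteq A$ gives $x\in T+\mathfrak nA=T+\alpha_0A$. Write $x=t+\alpha_0a$ with $t\in T$ and $a\in A$. Replacing $x$ by $x-t$, we may take $x=\alpha_0a\in T_1$. Set $I=\alpha_0A\cap T_1$; we need $\alpha_0a\in\alpha_0T_1$. I would look for this via integral closedness: Proposition \ref{3.4} for $R_1$ says $\m_1\cdots\m_n$ is an integrally closed ideal in $T_1$, and its extension to $A$ is $\alpha_1\cdots\alpha_{n}A$-type principal. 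So $\alpha_0A\cap T_1$ should be controlled by $\alpha_0$ times the conductor-like ideal $\m_1\cdots\m_{n-1}R_n\subseteq T_1$, via the contraction $\overline{IA}\cap T_1=\overline I$. Combining this with $\alpha_0x\in T$ should yield $x\in T+\alpha_0\m_1\cdots\m_{n-1}R_n\subseteq T$. If this contraction argument fails, the fallback is to use Lemma \ref{3.5} again to identify $x\otimes1-1\otimes x$ with an $\alpha_0$-torsion element, and to conclude by comparing lengths.
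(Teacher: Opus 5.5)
Your reduction to the local case and your first step are correct, but the first step is weak. The inclusion $x\in[T_1]^*_A$ follows from $T\subseteq T_1$ alone, via the natural map $A\otimes_TA\to A\otimes_{T_1}A$, so at that point you have not used the exactness in Lemma \ref{3.5}. The step you flag as the main obstacle, pushing $x$ from $T_1$ down to $T$, is never carried out, and the route you sketch cannot work. After subtracting $t$, the facts you retain are $x\in T_1$, $x\in\alpha_0A$ and $\alpha_0x\in T$; the last is automatic for $x\in T_1$, since $\alpha_0T_1=\operatorname{J}(T)$. These facts, together with integral closedness of ideals, do not force $x\in T$. For a concrete counterexample, let $R=k[[u^3,u^7]]\subseteq k[[u]]$ and $n=2$. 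Then
\begin{itemize}
\item $A=R_2=k[[u]]$, $T_1=[R_1]_{(1)}=k[[u^3,u^4,u^5]]$, $T=R_{(2)}=k[[u^3,u^7,u^8]]$ and $\alpha_0=u^3$;
\item $u^4=\alpha_0\cdot u$ lies in $T_1\cap\alpha_0A$, and $\alpha_0u^4=u^7\in T$, yet $u^4\notin T$;
\item $\alpha_0A\cap T_1=u^3k[[u]]\neq\alpha_0T_1$, so the contraction you hope for fails.
\end{itemize}
Your fallback applies the torsion idea to the wrong element: $x\otimes1-1\otimes x$ is already $0$ in $A\otimes_TA$, so observing that it is $\alpha_0$-torsion gives nothing. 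No length comparison will repair this.

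The missing idea is to use the strictness of $x-t$ itself. Proposition \ref{3.6} gives $x=t+\alpha_0a$ with $t\in T$ and $a\in A$. The element $\alpha_0a=x-t$ still lies in $T^*_A$, and since $\alpha_0\in T$, this says $\alpha_0(a\otimes1-1\otimes a)=0$ in $A\otimes_TA$. Hence $a\otimes1-1\otimes a\in(0):_{A\otimes_TA}\alpha_0=\ker\tau$ by Lemma \ref{3.5}; this is where the exactness is genuinely needed. So $a\in[T_1]^*_A=T_1$ by the induction hypothesis, and $x\in T+\alpha_0T_1=T+\operatorname{J}(T)=T$. This is exactly the paper's proof. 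With it, your preliminary step showing $x\in T_1$ becomes unnecessary.
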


\begin{proof}
We prove by induction on $n$. The case $n=0$ is obvious. Suppose  that $n>0$ and our assertion holds true for $n-1$. Since $R_{(n)} \subseteq [ R_{(n)} ]^*_{R_n}$, passing to the localization at a maximal ideal of $R$, we may assume  that both $R$ and $R_{(n)}$ are local (notice that $R_{(n)}$ commutes with the localization at a maximal ideal of $R$). Because $\alpha_0\cdot [R_1]_{(n-1)}$ is the maximal ideal of $R_{(n)}$, we have 
$$[ R_{(n)} ]^*_{R_n} \subseteq R_{(n)} + \alpha_0\cdot [R_1]_{(n-1)} \cdot R_n =R_{(n)}+\alpha_0\cdot R_n $$
by Proposition \ref{3.6}. Take $x\in [ R_{(n)} ]^*_{R_n}$ and write $x=x_1+\alpha_0 y$  $(x_1\in R_{(n)}, y\in R_n )$. Since $\alpha_0 y=x-x_1\in [ R_{(n)} ]^*_{R_n}$, we get
$$\alpha_0 (y\otimes 1-1\otimes y) = \alpha_0 y\otimes 1- 1\otimes \alpha_0 y = 0$$ 
in $R_n \otimes_{R_{(n)}} R_n$, so that 
$$y\otimes 1- 1\otimes y \in (0):_{R_n\otimes_{R_{(n)}} R_n} \alpha_0.$$
 Therefore, by using Lemma \ref{3.5}, we obtain $y\otimes 1-1\otimes y=0$ in $R_n\otimes_{[ R_1 ]_{(n-1)}} R_n $ since $[R_{(n)}]_1=\left[ R_1 \right]_{(n-1)} $, which implies $y\in \left[ [R_1]_{(n-1)}  \right]^*_{R_n} $. 
Let $B=R_1$. Then $\left[ [R_1]_{(n-1)}  \right]^*_{R_n} =\left[ B_{(n-1)}   \right]^*_{B_{n-1}}$. By the hypothesis of induction, we have 
$$\left[ B_{(n-1)}   \right]^*_{B_{n-1}}=B_{(n-1)}=\left[ R_1 \right]_{(n-1)}, $$
 so that $y \in \left[ R_1 \right]_{(n-1)}$. Therefore,
$$x=x_1+\alpha_0 y\in R_{(n)} +\alpha_0 \cdot \left[ R_1 \right]_{(n-1)}= R_{(n)}.$$
This completes the proof.
\end{proof}

We are now ready to prove Theorem \ref{main}.

\begin{proof}[Proof of Theorem \ref{main}]
Thanks to Proposition \ref{3.7}, we already have $R^*_{R_n} \subseteq  \left[ R_{(n)}\right]^*_{R_n}=R_{(n)}$. Hence it is sufficient to show the inclusion $(\supseteq)$.
Passing to the localization, we may assume that $R$ is a local ring. Moreover, enlarging the residue class field of $R$, we may assume that the residue class field of $R$ is infinite. We prove by induction on $n$.

If $n=0$, the assertion is obvious. 
Suppose $n=1$. Because the residue class field of $R$ is infinite, there exists $a_0\in R$ such that $\m_0=\overline{a_0 R}$. Here, we provide the following claim.

\begin{claim}\label{3.8}
Let $R$ be a Noetherian ring. Let $I=\overline{aR}$ with $a\in W(R)$. Then $IR^I\subseteq R^*_{R^I}$.
\end{claim}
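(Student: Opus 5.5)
The plan is a direct computation in $S\otimes_R S$, where $S=R^I$. We reduce to checking the identity $z\otimes 1=1\otimes z$ on a convenient set of $R$-module generators of $IS$. This suffices because $R^*_S$ is the kernel of the $R$-linear map $\sigma$.

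First, describe $S$ concretely. Since $R$ is Noetherian and $I=\overline{aR}$, the ideal $I$ is finitely generated and integral over $aR$. Hence $a$ is a reduction of $I$: $I^{r+1}=aI^r$ for some $r\ge 0$. By the discussion preceding Lemma \ref{2.1},
$$S=R^I=R\left[\tfrac{I}{a}\right]=\tfrac{I^n}{a^n}\quad\text{for every } n\ge r.$$
Consequently, for $n\ge r$,
$$IS=\tfrac{I^{n+1}}{a^n}.$$
This $R$-module is generated by the elements
$$z=\frac{x_0x_1\cdots x_n}{a^n}=x_0\beta_1\cdots\beta_n,\qquad x_i\in I,\ \beta_i=\tfrac{x_i}{a}\in S.$$
Note that $a\in W(R)$, so all these fractions make sense in $\rmQ(R)$. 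It therefore suffices to show $z\otimes 1=1\otimes z$ in $S\otimes_RS$ for each such $z$.

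The key step is a telescoping argument. It uses only two facts: elements of $R$ (namely $a$ and the $x_i$) can be moved across the tensor sign, and $x_i=a\beta_i$ holds in $S$. Start with
$$z\otimes 1=\beta_1\cdots\beta_n\otimes x_0=\beta_1\cdots\beta_n\otimes a\beta_0=a\beta_1\cdots\beta_n\otimes\beta_0=\beta_2\cdots\beta_n\otimes x_1\beta_0 .$$
In the last step we rewrote $a\beta_1=x_1$ and moved $x_1\in R$ to the right. Next write $x_1\beta_0=a\beta_0\beta_1$ and move $a$ back to the left, which turns $a\beta_2$ into $x_2$. Moving $x_2$ across then gives $\beta_3\cdots\beta_n\otimes x_2\beta_0\beta_1$.

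By induction on $k$ we show
$$z\otimes 1=\beta_{k+1}\cdots\beta_n\otimes x_k\beta_0\cdots\beta_{k-1}.$$
At $k=n$ this reads
$$z\otimes 1=1\otimes x_n\beta_0\cdots\beta_{n-1}=1\otimes z,$$
since $x_n\beta_0\cdots\beta_{n-1}=x_0\cdots x_n/a^n$.

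Summing over generators then gives $IS\subseteq R^*_S$, which is the claim. The step needing the most care is the first one: identifying $IS$ as $I^{n+1}/a^n$ and producing generators of the product form $x_0\beta_1\cdots\beta_n$. This is where the Noetherian hypothesis and $I=\overline{aR}$ are used, to get the reduction equation $I^{r+1}=aI^r$. Once that description is in hand, the tensor manipulation is routine bookkeeping.
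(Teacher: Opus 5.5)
Your proof is correct and follows the paper's argument. Like the paper, you use the reduction equation $I^{r+1}=aI^r$ to write $IR^I$ as a module of fractions: you get $I^{n+1}/a^{n}$, which is the same module as the paper's $I^n/a^{n-1}$. You then move elements of $R$ across the tensor sign using $x_i=a\cdot\frac{x_i}{a}$. Your telescoping induction writes out in full what the paper covers with ``by continuing the same operation'', since the paper only carries out the two-factor case.
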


\begin{proof}
Because $I^{n+1}=aI^n$ for some $n>0$, we can write $R^I=R\left[ \frac{I}{a} \right]= \frac{I^n}{a^n}$, so that $$IR^I=\frac{I^{n+1}}{a^n}=\frac{aI^n}{a^n}=\frac{I^n}{a^{n-1}}.$$
Let $x_1, \cdots , x_n\in I$. Then $\frac{x_1}{a}, \cdots , \frac{x_n}{a}\in R^I$, and we have $\frac{x_1x_2}{a} \in R^*_{R^I}$ since
$$\frac{x_1x_2}{a} \otimes 1= \frac{x_1}{a}\otimes x_2 = \frac{x_1}{a} \otimes a\cdot \frac{x_2}{a}=x_1\otimes \frac{x_2}{a}=1\otimes \frac{x_1x_2}{a}$$
in $R^I\otimes_R R^I$. By continuing the same operation, we get $\frac{x_1\cdots x_n}{a^{n-1}}\in R^*_{R^I}$, which induces $IR^I=\frac{I^n}{a^{n-1}}\subseteq R^*_{R^I}$.
\end{proof}

By this claim, we obtain
$$R_{(1)} =R+\m_0 R_1 \subseteq R^*_{R_1}.$$

Suppose that  $n>1$ and our assertion holds true for $n-1$. By Proposition \ref{3.4}, $\m_0\cdots \m_{n-1}$ is an integrally closed ideal in $R_{(n-1)}$. Thus, there exists $a\in R_{(n-1)}$ such that $\m_0\cdots \m_{n-1}=\overline{aR_{(n-1)}}$ ($R_{(n-1)}$ is also local and the residue class field is infinite; see Remark \ref{rem}). Moreover, because $\m_0\cdots \m_{n-1}=\alpha_0\cdots \alpha_{n-2}\m_{n-1}$, we have $(\m_0\cdots \m_{n-1})^{\ell}:(\m_0\cdots \m_{n-1})^{\ell}=\m_{n-1}^{\ell}:\m_{n-1}^{\ell}$ for every $\ell \ge 0$, so that
$$\displaystyle \left[ R_{(n-1)} \right]^{\m_0\cdots \m_{n-1}} = \bigcup_{\ell \ge0}[(\m_0\cdots \m_{n-1})^{\ell}:(\m_0\cdots \m_{n-1})^{\ell}]=\bigcup_{\ell\ge0}[\m_{n-1}^{\ell}:\m_{n-1}^{\ell}]=R_n.$$
Therefore, 
$$R_{(n)} = R_{(n-1)} + \m_0\cdots \m_{n-1} R_n \subseteq \left[ R_{(n-1)}  \right]^*_{R_n}\cdots ({\rm I})$$
by Claim \ref{3.8}. In contrast, by the hypothesis of induction, we have $R_{(n-1)}=R^*_{R_{n-1}}$. 
Hence 
$$R_{(n-1)}=R^*_{R_{n-1}}\subseteq R^*_{R_n} \cdots ({\rm I \hspace{-1pt} I}),$$
 since $R_{n-1} \subseteq R_n$. 
From (I) and (I\hspace{-1pt}I), we obtain 
$$R_{(n)} \subseteq \left[ R_{(n-1)} \right]^*_{R_n} \subseteq \left[  R^*_{R_n}   \right]^*_{R_n} =R^*_{R_n} ,$$
as desired.
\end{proof}

%%%%%%%%%%%%%%%%%%%4/20%%%%%%%%%%%%%%%%%%%%%%%%%%%%

We obtain the following corollary which we use to prove Theorem \ref{4.2}.

\begin{cor}\label{3.9}
The following conditions are equivalent. 
\begin{enumerate}[$(1)$]
\item
$R^*$ is a finitely generated $R$-module.
\item
$R^*=R_{(n)}=R+\m_0R_1+\cdots + \m_0\m_1\cdots \m_{n-1}R_n$ for some $n\ge 0$.
\item
$R_n$ is an {\it Arf ring} for some $n\ge 0$.

\end{enumerate}

\end{cor}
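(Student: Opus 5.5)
We will prove $(1)\Rightarrow(2)\Rightarrow(3)\Rightarrow(1)$, using Theorem \ref{main} throughout: $R^*=\bigcup_{n\ge0}R_{(n)}$, and $R_{(n)}=R^*_{R_n}$ for every $n$.

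For $(1)\Rightarrow(2)$: the $R_{(n)}$ form an ascending chain of $R$-submodules of $R^*$. If $R^*$ is finitely generated over the Noetherian ring $R$, it is a Noetherian $R$-module. The chain therefore stabilizes, so $R^*=R_{(n)}$ for some $n$.

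For $(2)\Rightarrow(3)$: suppose $R^*=R_{(n)}$. Since $R_{(n)}=R^*_{R_n}\subseteq R^*_{R_m}=R_{(m)}$ for $m\ge n$, we get $R_{(m)}=R_{(n)}$ for all $m\ge n$. I would show that $R_{n}$ is Arf by passing to a local computation. Localize at $M\in\Max R$; the construction commutes with this localization. Then use the recursion $R_{(m)}=R+\alpha_0[R_1]_{(m-1)}$. Equality $R_{(n+1)}=R_{(n)}$ gives $\alpha_0[R_1]_{(n)}\subseteq R+\alpha_0[R_1]_{(n-1)}$. Both sides lie in $\m_0\overline R$, so comparing them (intersecting with $\m_0\overline R$) yields $\alpha_0[R_1]_{(n)}=\alpha_0[R_1]_{(n-1)}$, since $\alpha_0$ is a non-zerodivisor. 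Hence $[R_1]_{(n)}=[R_1]_{(n-1)}$ for every larger index as well, so $[R_1]^*=[R_1]_{(n-1)}$. Iterating $n$ times shifts the index down to $0$: $[R_n]^*=[R_n]_{(0)}=R_n$. By Theorem \ref{2.4}, $R_n$ is Arf.

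The delicate point here is the intersection step $(R+\alpha_0X)\cap \m_0\overline R$. Since $R\cap\m_0\overline R=\m_0$ (integral closure of $\m_0$ lies in $\m_0$ because $\m_0$ is maximal), we get $\alpha_0[R_1]_{(n)}\subseteq \m_0+\alpha_0[R_1]_{(n-1)}$. Moreover $\m_0\subseteq \m_0R_1=\alpha_0R_1\subseteq\alpha_0[R_1]_{(n-1)}$, which gives the claimed equality.

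For $(3)\Rightarrow(1)$: if $R_n$ is Arf, then $R_n=R_n^*$ by Theorem \ref{2.4}. The cleanest route is the Arf closure characterization. Applying the equivalence $(2)\Leftrightarrow$ stabilization in reverse, $[R_n]_{(m)}=R_n$ for all $m$, hence by Theorem \ref{main} for $R_n$ we get $\bigcup_m[R_n]_{(m)}=R_n$. This means $R_n=R_{n+1}=\cdots$, using $[R_n]_{(1)}=R_n+\m_nR_{n+1}$. I expect this to be the main obstacle: I would argue that an Arf ring has $\m_nR_{n+1}\subseteq R_n$ forcing the tower to stabilize only in the sense $R_{(m+1)}=R_{(m)}$. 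Concretely, by the recursion, $R_{(m)}$ for $m\ge n$ equals $R_{(n)}+\m_0\cdots\m_{n-1}([R_n]_{(m-n)})$, shifted. Since $[R_n]_{(k)}=[R_n]^*_{R_{n+k}}\subseteq [R_n]^*=R_n$, every $R_{(m)}$ is contained in $R_{(n)}+\m_0\cdots\m_{n-1}R_n=R_{(n)}$. Thus $R^*=R_{(n)}$. Finally, $R_{(n)}$ is a finitely generated $R$-module because each $R_i=R_{i-1}^{\jcb(R_{i-1})}=\m_{i-1}^r:\m_{i-1}^r$ is module-finite over $R_{i-1}$, being contained in $\frac{1}{a^r}R_{i-1}$ for a non-zerodivisor $a$. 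This proves $(1)$.
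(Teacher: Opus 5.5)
Your proof is correct, and it takes a genuinely different route from the paper's.

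\emph{The paper's route.} For $(2)\Rightarrow(3)$ the paper argues that $R^*=R_{(n)}$ is module-finite, hence Cohen--Macaulay and strictly closed, hence Arf by Theorem~\ref{2.4}. It then iterates $[R_{(n)}]_1=[R_1]_{(n-1)}$ to get $[R^*]_n=R_n$, and cites Lipman's lemma that blow-ups of Arf rings are Arf. For $(3)\Rightarrow(2)$ it uses the same lemma to make every $R_\ell$ with $\ell\ge n$ Arf. Then $\m_\ell$ is stable, so $\m_\ell R_{\ell+1}=\m_\ell$, and the $R_{(\ell)}$ telescope.

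\emph{Your route.} You never need the blow-up lemma or the Cohen--Macaulayness of $R^*$.
\begin{itemize}
\item In $(2)\Rightarrow(3)$ you intersect the recursion $R_{(m)}=R+\alpha_0[R_1]_{(m-1)}$ with $\m_0\overline R$. The modular law and $R\cap\m_0\overline R=\m_0\subseteq\alpha_0R_1$ then let you cancel $\alpha_0$, which is a unit of $\rmQ(R)$ because $\alpha_0R_1$ contains a non-zerodivisor. This pushes the stabilization from $R$ down to $R_1$, and iterating gives $[R_n]^*=R_n$.
\item In $(3)\Rightarrow(1)$ you apply Theorem~\ref{main} to $R_n$ itself, getting $[R_n]_{(k)}\subseteq[R_n]^*=R_n$. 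You then use the decomposition $R_{(m)}=R_{(n)}+\m_0\cdots\m_{n-1}[R_n]_{(m-n)}$.
\end{itemize}

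\emph{Comparison.} Your argument is self-contained modulo Theorems~\ref{main} and~\ref{2.4}, applied to each $R_j$; these rings satisfy the standing hypotheses. It also yields the sharper shift principle $R^*=R_{(n)}\iff[R_n]^*=R_n$ with the same $n$. The paper's argument is shorter, but it leans on the external Arf blow-up lemma.

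Two small corrections.
\begin{itemize}
\item Localizing at $M\in\Max R$ is unnecessary. After localizing, $R_1$ is only semi-local, so at the iteration step you want the semi-local fact $R\cap\jcb(R)\overline R=\jcb(R)$. This holds by lying-over.
\item Delete the sentence asserting $R_n=R_{n+1}=\cdots$. It is false: $k[[t^2,t^3]]$ is Arf, but its blow-up is $k[[t]]$. Your subsequent ``concretely'' argument does not use this sentence, so the proof stands without it.
\end{itemize}
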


\begin{proof}
$(1) \Leftrightarrow (2)$ This follows from Theorem \ref{main} and the fact that $R^*$ is a Noetherian $R$-module.

$(2) \Rightarrow (3)$ Suppose that $R^*=R_{(n)}$. Then $R^*$ is also a Cohen-Macaulay ring since $R^*$ is a finitely generated $R$-module. Thus, $R^*$ is an Arf ring by Theorem \ref{2.4} (notice that $[R^*]^*=R^*$). 
In contrast, since $\left[ R_{(n)} \right]_1=\left[ R_1 \right]_{(n-1)}$,by continuing the same operation, we have
$$[R^*]_n=[R_{(n)}]_n=\left[ \left[ R_1 \right]_{(n-1)} \right]_{n-1}=\left[ \left[ R_2 \right]_{(n-2)} \right]_{n-2}= \cdots =\left[ \left[ R_n \right]_{(0)} \right]_{0}=R_n.$$
 Therefore, $R_n$ is also an Arf ring by \cite[Lemma 2.3.]{L} (or \cite[Corollary 2.5]{I}).
 
 $(3) \Rightarrow (2)$ 
Suppose that $R_n$ is an Arf ring. Then so is $R_{\ell}$ for every $\ell \ge n$ by \cite[Lemma 2.3.]{L} (or \cite[Corollary 2.5]{I}). 
Hence,  because $\m_{\ell}$ is a stable ideal in $R_{\ell}$, we have $R_{\ell+1}=\left[ R_{\ell} \right]^{\m_{\ell}} =\m_{\ell} :\m_{\ell}$ by Lemma \ref{2.1}, so that 
$$\m_{\ell}R_{\ell+1}=\m_{\ell} \subseteq R_{\ell}$$
for every $\ell \ge n$. Therefore, we have
$$R_{(\ell+1)} = R_{(\ell)} + \m_0\cdots \m_{\ell-1}\m_{\ell}R_{\ell+1}\subseteq R_{(\ell)} + \m_0\cdots \m_{\ell-1}R_{\ell}=R_{(\ell)},$$
which implies $R_{(n)}=R_{(n+1)}=\cdots =R^*$. 
\end{proof}

Concluding this section, let us give two examples.

\begin{ex}\label{3.10}
Let $V=k[[t]]$ be the formal power-series ring over a field $k$, and let $R=k[[t^3, t^{3\ell+1}]]\subseteq V$ $(\ell \ge 1)$ . Then 
$$R_{(n)}=k[[t^{3}, t^{3\ell +1}, t^{6\ell -3n +2}]]\subseteq R_n=k[[t^3, t^{3(\ell-n)+1}]]\ \ \text{for $1\le n\le \ell-1$, and}$$
$$R_{(\ell)}=R_{(\ell+1)}=\cdots =k[[t^3, t^{3\ell+1}, t^{3\ell+2}]]\subseteq R_{\ell}=\overline{R}=V.$$

Therefore, $R^*=R_{(\ell)}=R+\m_0R_1+\cdots + \m_0\m_1\cdots \m_{\ell-1}R_{\ell}=k[[t^3, t^{3\ell+1}, t^{3\ell+2}]]$.

\end{ex}

%\begin{ex}
%Let $V=k[[t]]$ be the formal power-series ring over a field $k$, and let $R=k[[t^3, t^{7}]]\subseteq V$. Then 
%$$R_{(1)}=k[[t^3, t^7, t^{11}]]\subseteq R_1=k[[t^3, t^4]],$$
%$$R_{(2)}=R_{(3)}=\cdots =k[[t^3, t^7, t^8]]\subseteq R_2=\overline{R}=V.$$

%Therefore, $R^*=R_{(2)}=R+\m_0R_1+\m_0\m_1R_2=k[[t^3, t^7, t^8]]$.

%\end{ex}

\begin{ex}\label{3.11}
Let $k[[X, Y]]$ be the 2-dimensional formal power-series ring over a field $k$, and let $R=k[[X, Y]]/(Y^3)$. We denote by $x, y$ the images of $X, Y$ in $R$.
Then
$$R_{(n)}=R\left[\frac{y^2}{x^n}\right] \subseteq R_n=R\left[\frac{y}{x^n}\right]$$
for any $n\ge 0$.
Therefore, $R^*=\underset{n\ge 0}{\bigcup}R_{(n)}=R\left[\frac{y^2}{x^n} \mid n\ge 0\right]$.
In particular, $R^*$ is neither a finitely generated $R$-module nor  a Noetherian ring.

\end{ex}

\section{finite generation of $R^*$}\label{sec4}

In this section, we characterize the finite generation of the strict closure $R^*$, when $R$ is a Cohen-Macaulay semi-local ring of dimension one with $\dim R_M=1$ for every $M\in\Max R$. We begin with the following proposition.

\begin{prop}\label{4.1}

Suppose that $R$ is a Noetherian ring and $\jcb(R)$ contains a non-zerodivisor on $R$. If $R^*$ is a finitely generated $R$-module, then $(\sqrt{(0)})^2=(0)$ in $R$.
\end{prop}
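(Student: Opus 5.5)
The plan is to show directly that $yz=0$ for any two nilpotent elements $y,z\in R$. We use a non-zerodivisor $a\in\jcb(R)$ and the same tensor manipulation as in Claim \ref{3.8}. The idea is that $yz$ becomes infinitely divisible by $a$ inside $R^*$, and finite generation then forces $yz=0$ by a Nakayama-type argument.

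\textbf{Step 1: integrality.} Fix $a\in\jcb(R)\cap W(R)$ and nilpotent $y,z\in R$. For each $n\ge0$, the elements $\frac{y}{a^n}$ and $\frac{z}{a^n}$ of $\rmQ(R)$ are nilpotent. Hence they are integral over $R$ and lie in $\overline{R}$.

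\textbf{Step 2: divided products lie in $R^*$.} We check that $\frac{yz}{a^n}\in R^*$ by computing in $\overline{R}\otimes_R\overline{R}$:
$$\frac{yz}{a^n}\otimes 1=\frac{y}{a^n}\otimes z=\frac{y}{a^n}\otimes a^n\cdot\frac{z}{a^n}=y\otimes\frac{z}{a^n}=1\otimes\frac{yz}{a^n}.$$
This is exactly the computation in the proof of Claim \ref{3.8}. Thus $\frac{yz}{a^n}\in R^*$ for every $n\ge0$.

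\textbf{Step 3: finite generation.} Since $R$ is Noetherian and $R^*$ is a finitely generated $R$-module, $R^*$ is a Noetherian $R$-module. Therefore the ascending chain of submodules $\sum_{i\le n}R\frac{yz}{a^i}$ stabilizes at some $N$. So we can write $\frac{yz}{a^{N+1}}=\sum_{i\le N}r_i\frac{yz}{a^i}=c\,\frac{yz}{a^N}$ with $c=\sum_i r_ia^{N-i}\in R$. Multiplying by $a^{N+1}$, which is legitimate because $R\subseteq\rmQ(R)$, gives $yz(1-ca)=0$ in $R$.

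\textbf{Step 4: conclusion.} Because $a\in\jcb(R)$, the element $1-ca$ is a unit, so $yz=0$. Hence $(\sqrt{(0)})^2=(0)$.

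The only delicate point is Step 2, namely making sure the tensor identity holds over $R$ inside $\overline{R}\otimes_R\overline{R}$. Moving $a^n$ across the tensor sign is valid because $a^n\in R$, so this step goes through.
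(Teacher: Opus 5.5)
Your proof is correct and follows the paper's argument. The paper likewise gets $\frac{(\sqrt{(0)})^2}{a^n}\subseteq R^*$ from the tensor computation of Claim \ref{3.8} (via $\sqrt{(0)}\subseteq\overline{a^nR}$), lets the resulting ascending chain stabilize in the Noetherian module $R^*$, and finishes with Nakayama's lemma; working elementwise with a single product $yz$ and the unit $1-ca$, as you do, is just a more explicit rendering of that same argument.
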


\begin{proof}
Take a non-zerodivisor $a\in \jcb(R)$ on $R$. Then, we have
$$\displaystyle \frac{(\sqrt{(0)})^2}{a^n} \subseteq \frac{(\overline{a^nR})^2}{a^n}\subseteq R^* $$
 for any $n\ge 0$, where the first inclusion follows from the fact  $\sqrt{(0)} \subseteq \overline{a^nR}$, and the second inclusion follows from the proof of Theorem \ref{main}. 
Therefore, we get the chain 
$$\displaystyle  \frac{(\sqrt{(0)})^2}{a} \subseteq  \frac{(\sqrt{(0)})^2}{a} \subseteq  \cdots \frac{(\sqrt{(0)})^2}{a^n} \subseteq  \cdots \subseteq R^*   $$ 
of $R$-modules. Because $R^*$ is a Noetherian $R$-module, there exists an integer $n\ge 1$ such that $\displaystyle \frac{(\sqrt{(0)})^2}{a^n}=\frac{(\sqrt{(0)})^2}{a^{n+1}}$, which implies $(\sqrt{(0)})^2=a(\sqrt{(0)})^2$. Consequently, we get $(\sqrt{(0)})^2=(0)$ by Nakayama's lemma, as desired. 
\end{proof}

In what follows, suppose that $R$ is a Cohen-Macaulay semi-local ring of dimension one with $\dim R_M=1$ for every $M\in\Max R$. We denote by $\widehat{R}$ the $\jcb(R)$-adic completion of $R$. It is well known that the integral closure $\overline{R}$ is a finitely generated $R$-module if and only if $\widehat{R}$ is reduced. The following is the strict closure version of  that one, which is the second main result of this paper.

\begin{thm}\label{4.2}
The following conditions are equivalent. 
\begin{enumerate}[$(1)$]
\item
$R^*$ is a finitely generated $R$-module.

\item
$(\sqrt{(0)})^2=(0)$ in $\widehat{R}$.

\item
$(P\widehat{R}_P)^2=(0)$ in $\widehat{R}_P$ for every $P\in \Min \widehat{R}$.

\end{enumerate}

\end{thm}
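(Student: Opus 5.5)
The plan is to reduce everything to the completion and then to a local statement at minimal primes. First, I would show that finite generation of $R^*$ is compatible with completion. Since $\widehat{R}$ is faithfully flat over $R$ and $R^*_S$ is the kernel of an $R$-linear map, strict closure commutes with flat base change. Concretely, I would use Theorem \ref{main} on both sides. The blow-ups $R_n$ at Jacobson radicals are compatible with completion: $\widehat{R_n}\cong R_n\otimes_R\widehat{R}$ is $[\widehat{R}]_n$, because $R_1=\jcb(R)^r:\jcb(R)^r$ for large $r$ commutes with flat extension, and $\jcb(R)\widehat{R}=\jcb(\widehat{R})$. Hence $R_{(n)}\otimes_R\widehat{R}=[\widehat{R}]_{(n)}$. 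The chain $R_{(n)}$ stabilizes if and only if the chain $[\widehat{R}]_{(n)}$ stabilizes, by faithful flatness. By Corollary \ref{3.9}, applied to both $R$ and $\widehat{R}$ (which satisfies the same hypotheses), condition $(1)$ is equivalent to $\widehat{R}^*$ being a finitely generated $\widehat{R}$-module. So we may assume $R=\widehat{R}$ is complete.

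$(1)\Rightarrow(2)$ now follows from Proposition \ref{4.1} applied to $\widehat{R}$.

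For $(2)\Leftrightarrow(3)$: since $\widehat{R}$ is Cohen--Macaulay of dimension one, $\Ass\widehat{R}=\Min\widehat{R}$. Hence $(\sqrt{(0)})^2=(0)$ if and only if it holds after localizing at each $P\in\Min\widehat{R}$, and $(\sqrt{(0)})\widehat{R}_P=P\widehat{R}_P$. The injection $\widehat{R}\hookrightarrow\prod_P\widehat{R}_P$ (valid because the associated primes are minimal) gives the converse direction.

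$(2)\Rightarrow(1)$ is the main step. With $R$ complete, set $N=\sqrt{(0)}$ with $N^2=0$. Then $R/N$ is complete and reduced, hence analytically unramified, so $\overline{R/N}$ is finite over $R/N$. I would aim to show that $R_n$ is Arf for large $n$, and then invoke Corollary \ref{3.9} $(3)\Rightarrow(1)$. Since $\rmQ(R)$ is a product of Artinian local rings $R_P$ with square-zero maximal ideal, $\overline{R}=\{x\in\rmQ(R)\mid \bar x\in\overline{R/N}\}$ is an extension of $\overline{R/N}$ by $N\rmQ(R)$. Because $\overline{R/N}$ is finite, for large $n$ the image of $R_n$ in $\rmQ(R/N)$ is $\overline{R/N}$, a semilocal principal ideal ring. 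Then $\m_n=\alpha_n R_n+(\text{nilpotents})$, and I expect that $\m_n^2=\alpha_n\m_n$, using $N^2=0$ together with the fact that $N\rmQ(R)\cap R_n$ is an $R_n$-module on which multiplication by $\alpha_n$ behaves well. More generally, I would show that every integrally closed open ideal $I$ of $R_n$ has the form $aR_n+(N\rmQ(R)\cap I)$ with $I^2=aI$, so that $R_n$ is Arf by Theorem \ref{2.3}.

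The hard part is this stability verification. An alternative route, which I would try first, is the following. Show directly that $R^*\subseteq R+ (\text{conductor-type module})$ by bounding $R_{(n)}$: the terms $\m_0\cdots\m_{n-1}R_n$ are eventually contained in $xR^\ast$ plus a finitely generated module. Here the key estimate is that nilpotents $\eta\in N\rmQ(R)\cap R_n$ satisfy $\m_0\cdots\m_{n-1}\eta\subseteq N\cdot(\text{fixed finite module})$, using $N^2=0$ so that products of two nilpotent denominators vanish. This contrasts with Example \ref{3.11}, where $y^2\neq0$ produced $y^2/x^n$ with unbounded denominators.
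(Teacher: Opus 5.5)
Your reduction to $R=\widehat{R}$ (via $R_{(n)}\otimes_R\widehat{R}=[\widehat{R}]_{(n)}$ and faithful flatness) is fine, and it matches the paper's Lemma \ref{4.3}. So are $(1)\Rightarrow(2)$ via Proposition \ref{4.1} and $(2)\Leftrightarrow(3)$ via $\Ass\widehat{R}=\Min\widehat{R}$. Your strategy for $(2)\Rightarrow(1)$ is also the paper's: choose $n$ so that $R_n$ maps onto $\overline{B}$, $B=R/N$; show $R_n$ is Arf via Theorem \ref{2.3}; conclude by Corollary \ref{3.9}.

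The gap is at the decisive step. You only say you ``expect'' every $I\in\calX_{R_n}$ to be stable, and you call this the hard part. Your fallback (bounding $\m_0\cdots\m_{n-1}R_n$ by ``$xR^*$ plus a finitely generated module'') is too vague to check. A relation $R^*=F+xR^*$ with $F$ finite also does not give finiteness of $R^*$ without a Nakayama argument that presupposes it. So as written, $(2)\Rightarrow(1)$ is not proved. Your remark that multiplication by $\alpha_n$ must ``behave well'' on $N\rmQ(R)\cap R_n$ shows what is missing: no such property is needed, only $N^2=0$.

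The step closes in a few lines with what you already have. Put $N_n=N\rmQ(R)\cap R_n$, the nilradical of $R_n$; then $N_n^2\subseteq N^2\rmQ(R)=0$.

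First, your choice of $n$ is legitimate. The map $\overline{R}\to\overline{B}$ is onto, and the images of the $R_n$ form an ascending chain of $B$-submodules of the Noetherian $B$-module $\overline{B}$ with union $\overline{B}$, so the chain reaches $\overline{B}$.

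Next, $\overline{B}$ is a principal ideal ring. Hence for any open ideal $I$ of $R_n$, the image of $I$ equals $\bar a\overline{B}$ for some $a\in I$, and therefore $I=aR_n+(I\cap N_n)$. The element $a$ is a non-zerodivisor: $\bar a$ divides the image of a non-zerodivisor of $I$, so $\bar a$ is a unit of $\rmQ(B)=\rmQ(R)/N\rmQ(R)$. Since $N\rmQ(R)$ is nilpotent, $a$ is then a unit of $\rmQ(R)$. Then
\[
I^2=a^2R_n+a(I\cap N_n)+(I\cap N_n)^2=aI,
\]
so $I$ is stable, $R_n$ is Arf, and Corollary \ref{3.9} gives $(1)$.

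The paper reaches the same decomposition differently. It writes $I=\overline{aR_n}=a\overline{R}\cap R_n$ and uses $\overline{R}=R_n+N\rmQ(R)$ (Lemma \ref{4.4}) to get $I=aR_n+\sqrt{(0)R_n}$. The principal-ideal-ring version above avoids Lemma \ref{4.4}(2) and the need to produce a principal reduction $a$.
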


Before proving Theorem \ref{4.2}, we show the following.

\begin{lem}\label{4.3}

$[ \widehat{R} ]^*=R^*\otimes_R \widehat{R}$.

\end{lem}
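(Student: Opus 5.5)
We need $[\widehat R]^*=R^*\otimes_R\widehat R$, i.e.\ strict closure commutes with completion. The plan is to use Theorem \ref{main} on both sides and compare the towers term by term. Note that $\widehat R$ is flat over $R$, is again Cohen-Macaulay semi-local of dimension one with every localization at a maximal ideal one-dimensional, and $\jcb(\widehat R)=\jcb(R)\widehat R$. Since $\widehat R$ is faithfully flat over the semi-local ring $R$, non-zerodivisors of $R$ stay non-zerodivisors, so $\rmQ(R)\otimes_R\widehat R\subseteq \rmQ(\widehat R)$. Thus $R^*\otimes_R\widehat R$ may be identified with $R^*\widehat R\subseteq \rmQ(\widehat R)$.

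\emph{Step 1: compatibility of the blow-up tower.} I claim that $[\widehat R]_n=R_n\otimes_R\widehat R$ and $\operatorname{J}([\widehat R]_n)=\m_n\otimes_R\widehat R$ for all $n$. For $n=1$: choose (after localizing and, if needed, passing to an infinite residue field, or simply using \cite[Proposition 1.1]{L}) $r$ large so that $R_1=\m_0^r:\m_0^r$. Colon ideals of finitely generated modules commute with flat base change, so $R_1\otimes\widehat R=(\m_0\widehat R)^r:(\m_0\widehat R)^r$, and the stabilization index is unchanged; hence this equals $[\widehat R]_1$. Next, $R_1$ is module-finite over $R$, so $R_1\otimes_R\widehat R$ is the $\jcb(R)$-adic completion of $R_1$. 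Since $\jcb(R_1)$ and $\jcb(R)R_1$ have the same radical, this completion is the $\jcb(R_1)$-adic completion of $R_1$, and the Jacobson radical of that completion is $\m_1\widehat{R_1}$. Induction on $n$, with $R_1$ in place of $R$, then gives the claim for all $n$.

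\emph{Step 2: transport $R_{(n)}$.} From Step 1 and flatness (products and sums of submodules of $\rmQ$ commute with $-\otimes_R\widehat R$), we get $[\widehat R]_{(n)}=R_{(n)}\otimes_R\widehat R$. Theorem \ref{main} applied to $\widehat R$ and to $R$ then gives
\[
[\widehat R]^*=\bigcup_n [\widehat R]_{(n)}=\bigcup_n R_{(n)}\otimes_R\widehat R=\Bigl(\bigcup_n R_{(n)}\Bigr)\otimes_R\widehat R=R^*\otimes_R\widehat R,
\]
where tensor products commute with directed unions because $\widehat R$ is flat. As an alternative that avoids Theorem \ref{main}, one can combine Theorem \ref{3.1} with the fact that $R^*_{R_n}$ is the kernel of $\sigma$, so $R^*_{R_n}\otimes\widehat R=(\widehat R)^*_{R_n\otimes\widehat R}$ by flatness. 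This again needs Step 1 together with $\bigcup_n [\widehat R]_n=\overline{\widehat R}$, which holds by the cited fact applied to $\widehat R$.

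The main obstacle is Step 1, specifically identifying $\jcb$ and the blow-up after base change: $R_n\otimes_R\widehat R$ must be the completion of $R_n$ with respect to its own Jacobson radical, and the blow-up at $\operatorname{J}$ must commute with it. I would handle this via module-finiteness of $R_n$ over $R$ (each $R_n$ is a finite $R$-module because $R_1=\m_0^r/a^r$ when a reduction exists, and in general is a colon of finitely generated modules), together with the radical comparison $\sqrt{\jcb(R)R_n}=\jcb(R_n)$, which follows from integrality.
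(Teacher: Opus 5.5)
Your proposal is correct and follows essentially the same route as the paper: show $[\widehat R]_n=R_n\otimes_R\widehat R=\widehat{R_n}$ with $\jcb([\widehat R]_n)=\jcb(R_n)\widehat R$. You do this by commuting the colon $\jcb(R)^{\ell}:\jcb(R)^{\ell}$ with the flat base change and inducting along the tower, which gives $[\widehat R]_{(n)}=R_{(n)}\otimes_R\widehat R$; you then pass to the directed union via Theorem \ref{main}. Your added justifications (module-finiteness of $R_n$ over $R$, and $\sqrt{\jcb(R)R_n}=\jcb(R_n)$, so that the $\jcb(R)$-adic and $\jcb(R_n)$-adic completions of $R_n$ agree) supply exactly what the paper's ``by continuing the same operation'' leaves implicit.
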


\begin{proof}

Notice that we can consider $\rmQ(R) \otimes_R \widehat{R} = \rmQ(\widehat{R})$. Because $\jcb(\widehat{R})=\underset{M\in \Max R}{\bigcap} M\widehat{R} =\jcb(R) \widehat{R},$ for large some $\ell \ge 1$, we have
$$[\widehat{R}]_1 =\jcb(R)^{\ell}\widehat{R}:\jcb(R)^{\ell}\widehat{R}=[\jcb(R)^{\ell}:\jcb(R)^{\ell}]\otimes_R \widehat{R}=R_1\otimes_R \widehat{R} =\widehat{R_1}.$$ 
By continuing the same operation, we get
$$[\widehat{R}]_n=R_n\otimes_{R_{n-1}} \widehat{R_{n-1}} =\widehat{R_n}=R_n\otimes_{R} \widehat{R}\ \  \text{and \  } \jcb([\widehat{R}]_n)=\jcb(R_n)\widehat{R},$$
so that
 $$[\widehat{R}]_{(n)}=\widehat{R}+\jcb(\widehat{R})[\widehat{R}]_1+\cdots +\jcb(\widehat{R})\cdots \jcb([\widehat{R}]_{n-1})[\widehat{R}]_n=R_{(n)}\otimes_R \widehat{R}$$
for every $n\ge 1$. Therefore, we get
$$\displaystyle [\widehat{R}]^*=\underset{n\ge 0}{\bigcup} [\widehat{R}]_{(n)}=\lim_{n\to \infty}[R_{(n)}\otimes_R \widehat{R}]=[\lim_{n\to \infty}R_{(n)}]\otimes_R \widehat{R}=R^*\otimes_R \widehat{R}$$
by Theorem \ref{main}, as desired. 
\end{proof}

We start the proof of Theorem \ref{4.2}.

\begin{proof}[Proof of Theorem \ref{4.2}]

  Notice that $$\rmQ(\widehat{R})=\underset{P\in \Min \widehat{R}}{\prod} [\widehat{R}]_P \text{\ \ and\ \ } \sqrt{(0)}\rmQ(\widehat{R}) = \underset{P\in \Min \widehat{R}}{\prod} P[\widehat{R}]_P,$$  since $\Ass \widehat{R}= \Min \widehat{R}$. 
  Therefore, the equivalence $(2) \Leftrightarrow (3)$ holds since $(\sqrt{(0)})^2 =(0)$ in $\widehat{R}$ if and only if $(\sqrt{(0)}\rmQ(\widehat{R}))^2 =(0)$ in $\rmQ(\widehat{R})$.
  
  $(1) \Rightarrow  (2)$ Suppose that $R^*$ is a finitely generated $R$-module. Since $[\widehat{R}]^*=R^*\otimes_R \widehat{R}$ by Lemma \ref{4.3}, $[\widehat{R}]^*$ is also a finitely generated $\widehat{R}$-module, so that $(\sqrt{(0)})^2=(0)$ in $\widehat{R}$ by Proposition \ref{4.1}.

To prove the implication $(2) \Rightarrow (1)$, we need some preparation.   Let $B=R/\sqrt{(0)}$. Then, $B$ is also a Cohen-Macaulay semi-local ring with $\dim B_N=1$ for every $N\in \Max B$, and reduced. Passing to the natural surjective map $\rmQ(R)\to \rmQ(B)$, we consider $\rmQ(B)=\rmQ(R)/\sqrt{(0)}\rmQ(R)$. Moreover, the following lemma holds true.

\begin{lem}\label{4.4}
The following assertions hold true.

\begin{enumerate}[$(1)$]

\item
$\overline{B}=\overline{R}/\sqrt{(0)}\rmQ(R)$.

\item
Suppose that $R=\widehat{R}$. Then $B_n=R_n/\sqrt{(0)R_n} = [R_n+\sqrt{(0)}\rmQ(R)]/\sqrt{(0)}\rmQ(R)$ for any $n\ge 0$.

\end{enumerate}

\end{lem}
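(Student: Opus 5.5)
For $(1)$, the plan is to identify $\overline{B}$ with the image of $\overline{R}$ under the natural surjection $\pi:\rmQ(R)\to\rmQ(B)=\rmQ(R)/\sqrt{(0)}\rmQ(R)$. First I will check that $\pi$ is well defined and surjective. Since $\Ass R=\Min R$, a non-zerodivisor of $R$ lies in no minimal prime, so its image in $B$ is a non-zerodivisor. Conversely, any non-zerodivisor of $B$ avoids every minimal prime of $R$ and is therefore a non-zerodivisor of $R$. This also shows $W(B)=\pi(W(R))$, so every fraction in $\rmQ(B)$ lifts.

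Next, $\pi(\overline{R})\subseteq\overline{B}$, because $\pi$ carries an integral equation over $R$ to one over $B$. For the reverse inclusion, take $\beta\in\overline{B}$ with lift $\alpha\in\rmQ(R)$. An integral equation for $\beta$ over $B$ lifts to $f(\alpha)=\varepsilon$ with $f$ monic over $R$ and $\varepsilon\in\sqrt{(0)}\rmQ(R)$. Since $\varepsilon$ is nilpotent, say $\varepsilon^m=0$, we get $f(\alpha)^m=0$, which is a monic equation in $\alpha$. Hence $\alpha\in\overline{R}$. Finally, $\sqrt{(0)}\rmQ(R)\subseteq\overline{R}$, since nilpotents are integral, so $\overline{B}=\overline{R}/\sqrt{(0)}\rmQ(R)$.

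For $(2)$, I will induct on $n$, using that $R=\widehat{R}$ and hence each $R_n$ is module-finite over $R$. Indeed, $R_1=\jcb(R)^\ell:\jcb(R)^\ell$ is isomorphic to a submodule of $\operatorname{Hom}$ of a finite module, hence finite over $R$, and by induction the same holds for each $R_n$.

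The key step is the case $n=1$. Let $\m=\jcb(R)$. After enlarging the residue fields by a flat extension if necessary, choose $a\in\m$ with $\m^{r+1}=a\m^r$. Then $R_1=\m^r/a^r$, and $\jcb(B)=\m B$ with $a$ still a reduction, so $B_1=(\m B)^r/a^r=\pi(\m^r/a^r)=\pi(R_1)$. Therefore $B_1=[R_1+\sqrt{(0)}\rmQ(R)]/\sqrt{(0)}\rmQ(R)$, which is isomorphic to $R_1/(R_1\cap\sqrt{(0)}\rmQ(R))$.

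It remains to see that $R_1\cap\sqrt{(0)}\rmQ(R)=\sqrt{(0)R_1}$. The nilradical of $R_1$ consists of the elements of $R_1$ that are nilpotent in $\rmQ(R)$, which is exactly $R_1\cap\sqrt{(0)}\rmQ(R)$, since the nilradical of $\rmQ(R)$ is $\sqrt{(0)}\rmQ(R)$. This gives $B_1=R_1/\sqrt{(0)R_1}$. To iterate, apply the same argument to $R_1$ in place of $R$. This needs $R_1$ to be complete in the $\jcb$-adic sense, which follows because $R_1$ is module-finite over the complete ring $R$. It also needs $R_1/\sqrt{(0)R_1}$ to be $B_1$, which is what was just shown, so $B_n=(B_{n-1})_1$ matches.

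The main obstacle is controlling the existence of the reduction element and verifying that the residue-field extension commutes with everything; I expect to use completeness mainly to keep the $R_n$ finite.
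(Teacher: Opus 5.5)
Your proof is correct. Part (1) is the paper's argument, but for (2) you take a different route.

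The paper uses $R=\widehat{R}$ to know that $\overline{B}$ is a finite product of discrete valuation rings. It then writes $\jcb(B_{n-1})=\jcb(R_{n-1})\overline{B}\cap B_{n-1}=\overline{\overline{a}B_{n-1}}$ for some $a\in\jcb(R_{n-1})$. Using that integral closure of ideals is compatible with passing modulo the nilradical, it lifts this to $\jcb(R_{n-1})=\overline{aR_{n-1}}$. Finally it compares $R_{n-1}[\jcb(R_{n-1})/a]$ with $B_{n-1}[\jcb(R_{n-1})B_{n-1}/\overline{a}]$.

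You instead take a principal reduction $a$ of $\jcb(R_{n-1})$ directly in $R_{n-1}$. Such an $a$ is necessarily a non-zerodivisor, since $\jcb(R_{n-1})$ is open, and its image is still a reduction of $\jcb(B_{n-1})$. You then read off $B_n=\pi(R_n)$ from $R_n=\jcb(R_{n-1})^r/a^r$.

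Your argument never actually uses completeness, although you expect to need it:
\begin{itemize}
\item $R_n$ is module-finite over $R$ for every $R$ in the standing setting; the paper itself uses $R_1=\jcb(R)^\ell:\jcb(R)^\ell$ for general $R$ in Lemma \ref{4.3}.
\item Completeness of $R_1$ plays no role in your inductive step.
\end{itemize}
So you in fact prove (2) without assuming $R=\widehat{R}$. Completeness is genuinely needed only later, to make $\overline{B}$ module-finite so that $B_\ell=\overline{B}$ for $\ell\gg 0$.

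The residue-field enlargement you flag is not an extra cost of your route. The paper's choice of $a$ with $\jcb(R_{n-1})\overline{B}=\overline{a}\,\overline{B}$ also needs large residue fields; for instance, no such $a$ exists for $\mathbb{F}_2[[x,y]]/(xy(x+y))$. You should still record the routine details:
\begin{itemize}
\item the semi-local $R(X)$ is faithfully flat over $R$;
\item $\rmQ(-)$, nilradicals, the blow-ups $R_n$ and the map $\pi$ all commute with $-\otimes_R R(X)$;
\item equality of the $R$-submodules $B_n$ and $\pi(R_n)$ of $\rmQ(B)$ descends from $R(X)$ to $R$.
\end{itemize}

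What the paper's route gives in addition is the description $\jcb(R_{n-1})=\overline{aR_{n-1}}$, obtained through $\overline{B}$, which it reuses in the proof of Theorem \ref{4.2}.
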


\begin{proof}
$(1)$  $\overline{R}/\sqrt{(0)}\rmQ(R)\subseteq \overline{B}$ is clear. Let $\varphi \in \overline{B}$, and write $\varphi = \overline{(\frac{a}{s})} $ $(s\in W(R), a\in R)$, where $\overline{*}$ denotes the image of $*$ in $\rmQ(B)=\rmQ(R)/\sqrt{(0)}\rmQ(R)$. Then,
$$\left(\frac{a}{s} \right)^n + a_1\left(\frac{a}{s} \right)^{n-1} + \cdots +  a_{n-1}\left(\frac{a}{s} \right) + a_n \in \sqrt{(0)} \rmQ(R)$$ 
for some $n\ge 1$ and $a_i \in R$ $(1\le i \le n)$. Therefore,
$$\left( \left(\frac{a}{s} \right)^n + a_1\left(\frac{a}{s} \right)^{n-1} + \cdots +  a_{n-1}\left(\frac{a}{s} \right) + a_n \right)^m=0 \text{\ \  in  $\rmQ(R)$}$$
for some $m\ge 1$, which implies $\frac{a}{s}\in \overline{R}$.

$(2)$  We prove by induction on $n$. If $n=0$, then
$$B_0=B=R_0/\sqrt{(0)}=[R_0+\sqrt{(0)}\rmQ(R)]/\sqrt{(0)}\rmQ(R).$$
Suppose that $n>0$ and our assertion holds true for $n-1$. Then,
$$B_{n-1}=R_{n-1}/\sqrt{(0)R_{n-1}}=[R_{n-1}+\sqrt{(0)}\rmQ(R)]/\sqrt{(0)}\rmQ(R),  \text{\  and $\jcb(B_{n-1})=\jcb(R_{n-1})B_{n-1}$}.$$
In contrast, since $B$ is a complete reduced ring, $\overline{B_{n-1}}=\overline{B}$ is a direct product of discrete valuation rings.  Therefore, we can write   
$$\jcb(B_{n-1})=\jcb(R_{n-1})B_{n-1}=\overline{\jcb(R_{n-1})\overline{B}}\cap B_{n-1}=\jcb(R_{n-1})\overline{B}\cap B_{n-1}=\overline{a} \overline{B}\cap B_{n-1}=\overline{\overline{a}B_{n-1}}$$
for some $a\in \jcb(R_{n-1})$, where the second equality follows from the fact that $\jcb(B_{n-1})$ is an integrally closed ideal in $B_{n-1}$. Moreover, because $\overline{\overline{a}B_{n-1}}=(\overline{aR})B_{n-1}$ (see {\cite[Proposition 1.1.5]{SH}}), we get $\jcb(R_{n-1})=\overline{aR_{n-1}}$. Hence we have 
$$R_n=[R_{n-1}]^{\jcb(R_{n-1})}=R_{n-1}\left[\frac{\jcb(R_{n-1})}{a}\right], \text{\ and \ } B_n= [B_{n-1}]^{\jcb(R_{n-1})B_{n-1}}=B_{n-1}\left[\frac{\jcb(R_{n-1})B_{n-1}}{\overline{a}}\right],$$
which implies that $B_{n}=R_{n}/\sqrt{(0)R_{n}}=[R_{n}+\sqrt{(0)}\rmQ(R)]/\sqrt{(0)}\rmQ(R)$, as desired.
\end{proof}

We are now ready to prove the implication $(2) \Rightarrow (1)$.  Suppose that $(\sqrt{(0)})^2=(0)$ in $\widehat{R}$. Since $[\widehat{R}]^*=R^*\otimes_R \widehat{R}$, we may assume that $R=\widehat{R}$. Thus, since $\overline{B}$ is a finitely generated $B$-module, there exists an integer $n\ge 0$ such that $B_{\ell} = \overline{B}$ for every $\ell \ge n$, so that
$$\overline{R}=R_{\ell}+\sqrt{(0)}\rmQ(R)$$
for any $\ell \ge n$ by Lemma \ref{4.4}. According to Corollary \ref{3.9}, it suffices to show that $R_n$ is an Arf ring. Let $I\in \calX_{R_{n}}$. By the proof of Lemma \ref{4.4} $(2)$, there exists $a\in I$ such that $I=\overline{aR_{n}}$. Therefore, we have
$$ I=a\overline{R}\cap R_{n}=\left[a\left(R_{n}+\sqrt{(0)}\rmQ(R)\right)\right]\cap R_{n}=[aR_{n} + \sqrt{(0)}\rmQ(R)]\cap R_{n}=aR_{n}+\sqrt{(0)R_{n}},$$
so that $$I^2=aI + a\sqrt{(0)R_{n}}+(\sqrt{(0)R_{n}})^2=aI$$ (notice that $R_{n}$ also satisfies the condition $(\sqrt{(0)R_{n}})^2=(0)$). Thus, $I$ is stable, so that $R_n$ is an Arf ring by Theorem \ref{2.3}. This completes the proof of Theorem \ref{4.2}.
\end{proof}

\begin{rem}\label{4.5}
Even if the condition $(\sqrt{(0)})^2=(0)$ is satisfied in $R$, $R^*$ is not necessarily a finitely generated $R$-module.  In fact, let $A=k[[X, Y]]/(Y^3)$. Then $(\sqrt{(0)})^2\neq (0)$ in $A$.  However, it is known that such a ring $A$ is a completion of some Noetherian local domain $R$ (see, e.g., \cite[Theorem 1]{Le}). 
%We set $C=A\oplus A$ and define the multiplication on $C$ by
%$$(a, x)\cdot (b, y)=(ab, ay+bx)$$ for $(a, x), (b, y)\in C$. Then, $C=A\ltimes A$ is a Cohen-Macaulay local ring with $\dim C=\dim A =1$, called $\it{ the\  idealization\  A\  over\  A }$. Moreover, since $\sqrt{(0)C}=\sqrt{(0)A}\times A$, $(\sqrt{(0)C})^2=(\sqrt{(0)A})^2\times \sqrt{(0)A} \neq (0)$ in $C$. 

\end{rem}

Concluding this paper, let us give a few consequences of Theorem \ref{4.2}. 

First, we can weaken the condition of assumptions in \cite[Corollary 7.8]{C}. We need not assume the finite generation of $\overline{R}$.

\begin{cor}\label{4.6}
Suppose that $(\sqrt{(0)})^2=(0)$ in $\widehat{R}$. Then $R^*$ coincides with the Arf closure of $R$, which is the smallest Arf ring between $R$ and $\overline{R}$.
\end{cor}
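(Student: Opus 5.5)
Under the hypothesis $(\sqrt{(0)})^2=(0)$ in $\widehat{R}$, Theorem \ref{4.2} shows that $R^*$ is a finitely generated $R$-module. Hence, by Corollary \ref{3.9}, $R^*=R_{(n)}$ for some $n$. Moreover, $R^*$ is a finitely generated torsion-free module over a one-dimensional Cohen--Macaulay semi-local ring, so it is again Cohen--Macaulay semi-local with every localization at a maximal ideal of dimension one. By Theorem \ref{2.4}, and since $[R^*]^*=R^*$, the ring $R^*$ is therefore an Arf ring lying between $R$ and $\overline{R}$. It remains to prove minimality: if $T$ is any Arf ring with $R\subseteq T\subseteq \overline{R}$, then $R^*\subseteq T$.

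For minimality, first note that $\overline{T}=\overline{R}$ because $T\subseteq\overline{R}$. Strict closure is monotone, i.e.\ $R^*\subseteq T^*$; this holds because $\overline{R}\otimes_R\overline{R}\to\overline{R}\otimes_T\overline{R}$ sends $\alpha\otimes1-1\otimes\alpha$ to the corresponding element. To conclude $T^*=T$ from Theorem \ref{2.4}, $T$ must satisfy the standing hypotheses: Cohen--Macaulay, semi-local, with all localizations at maximal ideals one-dimensional. The Arf closure is only defined among such rings, so I would take this as part of the meaning of ``Arf ring between $R$ and $\overline{R}$''. With that, $T^*=T$ and $R^*\subseteq T^*=T$.

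Consequently $R^*$ is the smallest Arf ring between $R$ and $\overline{R}$, i.e.\ the Arf closure.

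The main obstacle is checking that $T$ qualifies. It is semi-local and one-dimensional because it is integral over $R$. Cohen--Macaulayness is a concern only if $T$ is non-Noetherian. If needed, I would avoid it as follows. By Theorem \ref{main}, it suffices to show $R_{(m)}\subseteq T$ for every $m$, by induction using Claim \ref{3.8}-type arguments. Since $\m_0T$ is integrally closed and stable in $T$, we get $\m_0R_1\subseteq T$, and iterating along the blow-ups gives $R_{(m)}\subseteq T$. Alternatively, one can simply restrict to Arf rings as defined in Definition \ref{2.2}, which presupposes those hypotheses.
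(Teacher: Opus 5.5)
Your proof is correct and follows the paper's route. Theorem \ref{4.2} makes $R^*$ module-finite, so $R^*$ is a one-dimensional Cohen--Macaulay ring with $[R^*]^*=R^*$ and hence Arf by Theorem \ref{2.4}; minimality is then $R^*\subseteq T^*=T$ for any Arf ring $T$ between $R$ and $\overline{R}$, which, like the paper, you read as satisfying the standing hypotheses. The fallback argument in your last paragraph is unnecessary, and its premise is false in general: for $R=k[[t^2,t^5]]$ and the Arf ring $T=k[[t^2,t^3]]$ one has $\m_0T=t^2T$, which does not contain $t^3$ even though $t^3$ is integral over $t^2T$, so $\m_0T$ is not integrally closed in $T$.
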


\begin{proof}
Since $R^*$ is a finitely generated $R$-module, $R^*$ is a Noetherian ring with $[R^*]^*=R^*$. Therefore,  $R^*$ is  an Arf ring by Theorem \ref{2.4}. In contrast,  let $A$ be an Arf ring between $R$ and $\overline{R}$. Then, we have $R^*\subseteq A^*=A$ by Theorem \ref{2.4}.
\end{proof}

Second, we have the following.

\begin{cor}\label{4.7}
Suppose that $(\sqrt{(0)})^2=(0)$ in $\widehat{R}$ and $\widehat{R}/\sqrt{(0)}$ is integrally closed. Then $R=R^*$.

\end{cor}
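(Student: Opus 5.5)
Since $\widehat{R}/\sqrt{(0)}$ is integrally closed, I want to show $\overline{R}$ already sits inside $R+\sqrt{(0)}\rmQ(R)$ after completion, and then that every element of $\overline{R}$ that comes from the nilradical is killed enough to lie in $R$. By Lemma \ref{4.3}, $[\widehat{R}]^*=R^*\otimes_R\widehat{R}$. Since $\widehat{R}$ is faithfully flat over $R$, $R=R^*$ holds as soon as $\widehat{R}=[\widehat{R}]^*$. Indeed, $R^*/R\otimes_R\widehat{R}=0$ forces $R^*/R=0$. So I may assume $R=\widehat{R}$, and set $B=R/\sqrt{(0)}$, which is now integrally closed by hypothesis.

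By Theorem \ref{2.4}, it suffices to show that $R$ is an Arf ring. By Theorem \ref{2.3}, this means every $I\in\calX_R$ is stable. I follow the end of the proof of Theorem \ref{4.2} with $n=0$. Since $B=\overline{B}$, Lemma \ref{4.4} with $n=0$ gives $\overline{R}=R+\sqrt{(0)}\rmQ(R)$. That is, $B_\ell=B=\overline{B}$ for all $\ell\ge 0$, so the index $n$ there can be taken to be $0$.

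Now take $I\in\calX_R$. I need some $a\in I$ with $I=\overline{aR}$, and I get it as in the proof of Lemma \ref{4.4}(2). The ideal $IB$ is integrally closed in $B$, because $I$ is integrally closed and contains the nilradical. Since $\overline{B}=B$ is a finite product of DVRs, $IB$ is principal, generated by a non-zerodivisor $\overline{a}$ with $a\in I$. Then $I=\overline{aR}$ by \cite[Proposition 1.1.5]{SH}.

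Then, exactly as in that proof,
$$I=a\overline{R}\cap R=[aR+\sqrt{(0)}\rmQ(R)]\cap R=aR+\sqrt{(0)}.$$
Hence $I^2=aI+a\sqrt{(0)}+(\sqrt{(0)})^2=aI$, using $(\sqrt{(0)})^2=(0)$. So $I$ is stable, $R$ is Arf, and $R=R^*$.

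The main obstacle is the descent step. I must check that flatness really gives $R^*\otimes_R\widehat{R}\supseteq R\otimes_R\widehat{R}$ with cokernel $(R^*/R)\otimes_R\widehat{R}$, and that faithful flatness applies to the non-finitely-generated module $R^*/R$. It does, since faithful flatness detects vanishing of arbitrary modules.

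A second point to verify is $a\sqrt{(0)}\subseteq aI$ and $\sqrt{(0)}\rmQ(R)\cap R=\sqrt{(0)}$. The inclusion $a\sqrt{(0)}\subseteq aI$ is clear, since $\sqrt{(0)}\subseteq I$. The equality $\sqrt{(0)}\rmQ(R)\cap R=\sqrt{(0)}$ holds because nilpotency is preserved under clearing non-zerodivisor denominators.
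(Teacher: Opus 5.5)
Your proposal is correct and follows essentially the paper's route. You reduce to $R=\widehat{R}$ via Lemma \ref{4.3}, and your faithful-flatness descent spells out a reduction that the paper's proof states tersely (its ``We may assume that $R=R^*$'' should read ``$R=\widehat{R}$''). You then run the $(2)\Rightarrow(1)$ argument of Theorem \ref{4.2} with $n=0$, showing each $I\in\calX_R$ equals $aR+\sqrt{(0)}$ and is therefore stable, so $R$ is Arf and $R=R^*$ by Theorem \ref{2.4}.
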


\begin{proof}
We may assume that $R=R^*$. By the proof of $(2)\Rightarrow (1)$ in Theorem \ref{4.2}, $R_0=R$ is an Arf ring since $B=R/\sqrt{(0)}$ is integrally closed. Therefore, $R=R^*$ by Theorem \ref{2.4}, as desired. 
\end{proof}

\begin{rem}\label{4.8}
\begin{enumerate}[$(1)$]
\item
The converse of Corollary \ref{4.7} does not hold in general. For example, let $R=k[[X, Y]]/(XY)$. Then $R=\widehat{R}$, $R=R^*$, and $R$ is reduced, but $R\neq \overline{R}$. 

\item
In Corollary \ref{4.7}, we cannot remove the assumption that  $(\sqrt{(0)})^2=(0)$ in $\widehat{R}$. In fact, let $R=k[[X, Y]]/(Y^3)$. Then $R=\widehat{R}$, and $R/\sqrt{(0)}\cong k[[X]]$ is integrally closed. However, $R\neq R^*$ by Example \ref{3.11}.

\end{enumerate}

\end{rem}

\end{document}